\newtheorem{theorem}{Theorem}[section]
\newtheorem{proposition}[theorem]{Proposition}
\newtheorem{lemma}[theorem]{Lemma}
\newtheorem{definition}[theorem]{Definition}
\newtheorem{example}[theorem]{Example}
\newtheorem{remark}[theorem]{Remark}
\newenvironment{customthm}[1]
  {\innercustomthm}
  {\endinnercustomthm}
\numberwithin{equation}{section}
\newcommand{\settheoremtag}[1]{
	\let\oldthebigthm\thebigthm
	\renewcommand{\thebigthm}{#1}
	\g@addto@macro\endbigset{
		\addtocounter{bigthm}{-1}
		\global\let\thebigthm\oldthebigthm}
}
\renewcommand{\leq}{\leqslant}
\renewcommand{\geq}{\geqslant}
\renewcommand{\leq}{\leqslant}
\renewcommand{\geq}{\geqslant}
\title{Singular Gauduchon Conjecture}
\author{Guilherme Cerqueira-Gonçalves}
\address{Université de Toulouse, CNRS, Institut de Mathématiques de Toulouse, France}
\email{guicerqg@gmail.com / guilherme.cerqueira\mathunderscore{}goncalves@math.univ-toulouse.fr}
\urladdr{\href{https://guicerqmath.github.io/}{https://guicerqmath.github.io/}}
\date{December 17, 2025}
\pgfplotsset{compat=1.18}
\begin{document}

\begin{abstract}
In 1984 Gauduchon conjectured that one can find Gauduchon metrics
with prescribed Ricci curvature on all compact complex manifolds.
This conjecture was settled by Székelyhidi-Tosatti-Weinkove~\cite{TW17,TW19,STW17}
by the study of the Monge-Ampère equation for $(n-1)$-plurisubharmonic functions with a gradient term.
In this paper we study a singular version of this conjecture.
We obtain a $C^{0}$-estimate for this problem, without gradient terms, in smoothable hermitian variaties by adapting a technique of~\cite{GL25}.
We also prove the smoothness of solutions on holomorphic Kähler families, generalizing~\cite{TW17}.
\end{abstract}

\maketitle

\tableofcontents

\section*{Introduction}\label{CYG: background}

Yau's solution of the Calabi conjecture~\cite{Yau78}
prescribes the Ricci form of a Kähler metric in a fixed Kähler class.
In the Hermitian setting Gauduchon~\cite[IV.5]{Gau84} proposed the analogous problem for Gauduchon metrics, i.e. a Hermitian metric $\omega_{G}$ such that $\partial \overline{\partial}\omega_{G}^{n-1} = 0$.
This problem, the Gauduchon conjecture, was settled by Székelyhidi, Tosatti and Weinkove~\cite{TW17,TW19,STW17}. 
This article studies a singular version of it.

Gauduchon metrics are ubiquitous objects in Hermitian geometry since they exist in the conformal class of any Hermitian metric by~\cite{Gau77}.
Recently, Barbaro and Otiman~\cite{BO25} investigated Calabi-Yau-Gauduchon metrics (Gauduchon + Chern-Ricci flat) on locally conformally Kähler manifolds.
The Gauduchon cone, related to the pseudo-effective cone~\cite{L99}, also motivates a finer study of such metrics,
see~\cite{Pop15,PU18,OV25a,OV25b}.

Degenerate PDEs on singular spaces are tightly linked to the Minimal Model Program and moduli theory.
Some recent developments are the Kähler singular version of Yau's theorem by Eyssidieux, Guedj and Zeriahi~\cite{EGZ09}
and the construction of Gauduchon metrics on singular Hermitian varieties by Pan~\cite{Pan22,Pan25}. 
An important open problem about singular Hermitian varieties is Reid's fantasy~\cite{Rei87} which conjectures
that Calabi-Yau threefolds form a connected moduli space by means of conifold transitions introduced by Clemens and Friedman~\cite{Cle83,Fri86}
consisting of smoothings and bimeromorphic transformations of non-Kähler Calabi-Yau varieties.
These constructions are believed to have intimate relationships with Hull-Strominger systems~\cite{Hul86,Str86}.
On such systems one needs to study a Calabi-Yau semi-Kähler metric (also known as balanced, defined as $d\omega_{B}^{n-1} = 0$~\cite{Gau77b}).
These metrics are intimately related to the Calabi-Yau-Gauduchon equation we study in this paper, see~\cite{FWW10a,FWW10b,TW17,GS25,Chu12,FLY12,CPY24}.

The Calabi-Yau-Gauduchon problem on a hermitian manifold $(M,\beta)$ equipped with a Gauduchon metric $\beta$ reduces to finding a hermitian metric $\alpha$ with the following properties (see~\cite[Theorem 1.3]{STW17}):

\begin{enumerate}

	\item $\alpha$ is Gauduchon, i.e., $\partial \overline{\partial}\alpha^{n-1} =0$.

	\item $[ \alpha^{n-1} ] = [ \beta^{n-1} ]$ in Aeppli cohomology $H^{n-1,n-1}_{A}(M)$. 

	\item $Ric^{BC}(\alpha) = \eta$, for a fixed closed real $(1,1)$ form $\eta \in c_{1}^{BC}(M)$.
	
\end{enumerate}

Recall that the $(n-1,n-1)$-Aeppli cohomology group is defined as:

\noindent
\[
H_{A}^{n-1,n-1}(M) = \frac{\{ \partial \overline{\partial}\text{-closed real } (n-1,n-1)\text{ currents}\}}{\{ \partial\gamma + \overline{\partial\gamma} ~~| ~~\gamma ~~ (n-2,n-1)\text{ current}\}}.
\]

By~\cite[Theorem A]{Pan22} for any compact smoothable hermitian variety $X$, that is a variety that can be approximated by a family of smooth manifolds as in~\ref{set: sing family setting}, 
there exists a singular Gauduchon metric, i.e., a hermitian metric $\beta$ that is smooth and Gauduchon on the regular part $X^{\mathrm{reg}}$ and bounded on $X$.
Such a metric is constructed as a limit of the metrics on the nearby smooth fibers of the family.
By~\cite[Theorem B]{Pan22} $\beta^{n-1}$ extendeds to a $(n-1,n-1)$ pluriclosed current $T_{G}$, i.e., $\partial \overline{\partial}T_{G} = 0$, defining an Aeppli class. (See~\cite{Pan25} for a generalization to compact hermitian varieties.)
Hence, both of the first points have their singular equivalents. Prescribing the Bott-Chern Ricci form on a singular variety is equivalent (see~\cite[Theorem 4.5]{GL23}) to solving

\noindent
\begin{equation}\label{eq: CY}\tag{CY}
	\omega^{n} = \mu
\end{equation}

\noindent
where $\mu$ is the adapted volume (see Definition~\ref{def: adapted volume})
Thus, one can pose the singular Calabi-Yau-Gauduchon problem as follows:

\begin{customthm}{Problem CYG} \label{prob: Problem CYG}
Let $X$ be a compact smoothable hermitian variety, 
$\beta$ a singular Gauduchon metric on $X$ with Gauduchon class $[\beta^{n-1} ] \in H^{n-1,n-1}_{A}(X)$ and
$\eta \in c_{1}^{BC}(X)$ a closed real $(1,1)$ form on $X$.
Does there exist a singular metric $\alpha$ on $X$ such that

\begin{enumerate}

	\item $\alpha$ is Gauduchon on $X$, i.e., $\partial \overline{\partial}\alpha^{n-1} =0$ on $X^{\mathrm{reg}}$.

	\item $[ \alpha^{n-1} ] = [ \beta^{n-1} ] \in H^{n-1,n-1}_{A}(X)$.

	\item $Ric^{BC}(\alpha) = \eta $ on $X^{\mathrm{reg}}$?
	
\end{enumerate}

\end{customthm}

\begin{customthm}{Remark}
The expected metric needs to be more singular than the \textit{bounded} Gauduchon metric from~\cite[Definition 1.2]{Pan22}. 
If the solution was quasi-isometric to a smooth metric then $X$ would be smooth by~\cite[Lemma 1.6]{GGZ23b}.
By~\cite[Lemma 6.4]{EGZ09} the volume of the desired Gauduchon metric needs to be $L^{p}$ relative to a smooth metric on $X$, for some $p>1$.	
\end{customthm}

Let us recall the notion of smoothing families:

\begin{customthm}{Setting (SF)}\label{set: sing family setting}

Let $\mathfrak{X}$ be an $(n+1)$-dimensional, irreducible, reduced, complex analytic space.
Suppose that 
\begin{itemize}
    \item $\pi: \mathfrak{X} \to \mathbb{D}$ is a proper, surjective, holomorphic map with connected fibers $X_{t} := \pi^{-1}(t)$;
    \item $\pi$ is smooth over the punctured disk $\mathbb{D}^{\ast}$;
    \item $X_{0}$ is an $n$-dimensional, normal compact complex analytic space. 
	\item $X_{0}$ has only isolated singularities.
\end{itemize}
Let $\omega, \beta$ be hermitian metrics on $\mathfrak{X}$ in the sense of Definition~\ref{def:metrics}.
For each $t \in \mathbb{D}$, we define the hermitian metric $\omega_{t}$ on fiber $X_{t}$ by restriction (i.e. $\omega_{t} := \omega|_{X_t}$).

\end{customthm}

Call $X$ smoothable if it arises as such a central fiber $X_{0} = X$ for some family $\mathfrak{X}$ as above.

In the smooth case \ref{prob: Problem CYG}
is solved in~\cite[Theorem 1.4]{STW17}
by studying a complex Monge-Ampère equation for $(n-1)$-plurisubharmonic functions with a gradient term.
Such equation was discovered by Popovici~\cite{Pop15} and independently by Tossati and Weinkove~\cite{TW19} (see~\cite{FWW10a,FWW10b} for earlier related discussions).
We will restrict ourself to the case without gradient term, studied in~\cite{TW17}, that is:

For $(\mathfrak{X},\beta,\omega)$ a smoothing as in~\ref{set: sing family setting},
fix $p>1$ and $0 \leqslant f_{t} \in L^{p}(X_{t},\omega_{t}^{n})$ a family of densities, $ 1 \leqslant \rho_{t} \in L^{\infty}(X_{t})$ a family of functions with $f_{t},\rho_{t} \in C^{\infty}(X_{t}), \forall t \neq 0$.
Assume the families $(f_{t})_{t \in \mathbb{D}}$ and $(\rho_{t})_{t \in \mathbb{D}}$ satisfy: there exists constants $c_{f},C_{f},G >0$ such that $\forall t \in \mathbb{D}$,

\noindent
\begin{equation}\label{eq: density bounds}\tag{DB}
	c_{f} \leqslant \int_{X_{t}}f_{t}^{\frac{1}{n}}\omega_{t}^{n} \quad \text{and} \quad ||f_{t}||_{L^{p}(X_{t},\omega_{t}^{n})} \leqslant C_{f},
\end{equation}

\noindent
\begin{equation}\label{eq: Gauduchon bound}\tag{GB}
	\sup_{X_{t}}\rho_{t} \leqslant G \quad \text{and} \quad \inf_{X_{t}} \rho_{t} = 1.
\end{equation}

Then we study the following equation with the conditions $\sup_{X_{t}}\varphi_{t} = 0$ and $\alpha_{t} >0$:

\noindent
\begin{equation}\label{eq: family CYG}\tag{CYG}
	\alpha_{t}^{n} := \left( \rho_{t} \beta_{t} + \frac{(\Delta_{\omega_{t}}\varphi_{t})\omega_{t} -dd^{c}_{t}\varphi_{t}}{n-1} \right)^{n} = c_{t}f_{t}\omega_{t}^{n},
\end{equation}

\noindent
where $(\varphi_{t},c_{t}) \in L^{\infty}(X_{t}) \cap \mathrm{PSH}_{n-1}(X_{t}, \rho_{t}\beta_{t}, \omega_{t}) \times \mathbb{R}_{>0}$ is the unkown.
The set $\mathrm{PSH}_{n-1}(X,\beta,\omega)$ of $(n-1)$-quasi plurisubharmonic functions on a space $X$ with two hermitian metrics $\beta,\omega$ can be defined (see Definition~\ref{def: q n-1 psh}) as the set of quasi-subharmonic functions
such that $\alpha_{\varphi} := \beta + \frac{(\Delta_{\omega}\varphi)\omega -dd^{c}\varphi}{n-1} \geqslant 0$ in the sense of currents.

By~\cite[Lemma 4.4]{DNGG23} that the adapted volume of smoothings satisfy these integral bounds~\eqref{eq: density bounds} 
when the central fiber $X_{0}$ has canonical singularities.
By~\cite[Theorem A]{Pan22} the Gauduchon factors of $\beta_{t}$ satisfy the uniform bound~\eqref{eq: Gauduchon bound}.
Hence, then a $L^{\infty}$ bound for the problem above gives a bound for a simplified singular Calabi-Yau-Gauduchon.

\smallskip
We can state now our first result:

\begin{customthm}{Theorem A}\label{thm: C0 family}

Under~\ref{set: sing family setting}
and the uniform hypotheses above~\eqref{eq: density bounds}~\eqref{eq: Gauduchon bound},
For each $t \in \mathbb{D}$, let the pair $(\varphi_{t},c_{t}) \in \mathrm{PSH}_{n-1}(X_{t},\beta_{t},\omega_{t})  \cap L^{\infty}(X_{t}) \cap \mathbb{R}_{>0}$ be the solution to the complex $(n-1)$Monge-Ampère equation~\eqref{eq: family CYG}.

Then there exists a constant $C>0$ uniform, independent of $t$, such that~$\forall t \in \mathbb{D}_{1/2}$,

\noindent
\[
c_{t} + c_{t}^{-1} + ||\varphi_{t}||_{L^{\infty}} \leqslant C.
\]

\end{customthm}

For the precise dependence of $C$ above see Theorem~\ref{teo: C0 Vincent}.

\smallskip
The higher order estimates, obtained in~\cite{TW17} and~\cite{Pan22} are non quantitative. 
The first due to a blow-up analysis for the $C^{1}$-estimate.
The second due to a localization argument. 
Consequently, obtaining higher order estimates in the presence of singularities for Equation~\eqref{eq: family CYG} is more daunthing.
Taking this into account, we dedicate the rest of the paper to the case of families with no singular fiber, that is
a deformation of the complex structure of a complex manifold $M$, here assumed Kähler. 

To highlight the fact that for this second part there is no singular fiber 
we will represent our complex manifolds with $M$ and the holomorphic family with $\mathcal{M}$.

In such a setting we have our second result:

\begin{customthm}{Theorem B}\label{teo: estimates HF}

A holomorphic family $(\mathcal{M}, \beta, \omega)$ under the assumptions of~\ref{set: sing family setting}, with the central fiber $M_{0}$ a smooth Kähler manifold and $\omega$ a Kähler metric, 
$f \in C^{\infty}(\mathcal{M})$ such that for each $t \in \mathbb{D}$ let $f_{t} := f|_{M_{t}} \in C^{\infty}(M_{t})$, where $M_{t}:= \pi^{-1}(t)$. 
Then, for each $t \in \mathbb{D}_{\frac{1}{2}}$ there exists a unique solution $(u_{t},c_{t}) \in \mathrm{PSH}_{n-1}(X,\beta, \omega) \cap C^{\infty}(M_{t}) \times \mathbb{R}_{>0}$ for the equation:

\noindent
\[
\left(\beta_{t} + \frac{1}{n-1}\Big( (\Delta_{\omega_{t}}u_{t})\omega_{t} - dd^{c}u_{t} \Big) \right)^{n} = c_{t}f_{t}\beta_{t}^{n}, \quad \sup_{M_{t}}~u_{t} = 0
\]
	
\noindent
and for any $k \in \mathbb{N}$

\noindent
\[
||u_{t}||_{C^{k}(M_{t})} + c_{t} + c_{t}^{-1} \leqslant C
\]

\noindent
for a uniform $C>0$ independent of $t$. Hence, $C$ independs on the complex structure of $M$.

\end{customthm}

This past result produces in a general Kähler manifold $M$ non-Kähler Gauduchon (or semi-Kähler) metrics,
with prescribed Ricci curvature, that vary smoothly with the complex structure if one assumes $\beta$ is non-Kähler Gauduchon (or semi-Kähler) 
as they have $C^{\infty}$ bounds independent of it. See Section~\ref{CYG: preliminaries} or~\cite{TW17}.

\textbf{Organization of the article:}

\begin{itemize}
	\item Section~\ref{CYG: preliminaries}
	familiarizes the reader with $(n-1)$-plurisubharmonic functions in $\mathbb{C}^{n}$, manifolds and introduces the singular setting.
	
	\item Section~\ref{CYG: C0}
	stablishes two different uniform bounds for the normalizing constant and proves~\ref{thm: C0 family}.
	
	\item Sections~\ref{CYG: C2} and~\ref{CYG: C1 e HO HF} extend the $C^{2},C^{1}$ and higher-order estimates of~\cite{TW17} to holomorphic families
	and complete the proof of~\ref{teo: estimates HF}
\end{itemize}

\begin{customthm}{Acknowledgements}
The author would like to first thank his PhD advisor, 
Vincent Guedj, for his support, guidance and comments. 
The author also thanks Chung-Ming Pan and Tat~Dat~Tô for useful discussions and references.
Furthermore, the author thanks Hoang-Chinh Lu for discussions regarding the Hessian equation.  
This work received support from the EUR-MINT
(State support managed by the National Research 
Agency for Future Investments
program bearing the reference ANR-18-EURE-0023).

\end{customthm}

\section{Quasi \texorpdfstring{$(n-1)$}{n-1}-plurisubharmonic functions}\label{CYG: preliminaries}

We set $d^{c} = \frac{i}{2}(\overline{\partial} - \partial) $; hence $dd^{c} = i \partial \overline{\partial}$, $\omega_{\mathbb{C}^{n}}$ the canonical metric of $\mathbb{C}^{n}$.
Fix $n \geqslant 3$ the complex dimension of our spaces, since $(n-1)$-psh functions are just psh functions for $n=2$.

We define the complex Laplacian with respect to a hermitian metric $\omega$ as:

\noindent
\[
\Delta_{\omega}g := tr_{\omega}(dd^{c}g) = n \frac{dd^{c}g \wedge \omega^{n-1}}{\omega^{n}}.
\]

\noindent
\subsection{\texorpdfstring{$(n-1)$}{n-1}-plurisubharmonic functions}

Fix $\Omega \subset \mathbb{C}^{n}$ domain.

\subsubsection{Basic properties and examples}

\begin{definition} \label{def: n-1 psh}
We say that $u \in C^{2}(\Omega)$ is a $(n-1)$-plurisubharmonic function on $\Omega$
if the folowing $(1,1)$-form is semi-positive

\noindent
\[
\widehat{dd^{c}u} := (\Delta u)\omega_{\mathbb{C}^{n}} - dd^{c}u \geqslant 0.
\]

\end{definition}

Note that $\widehat{dd^{c} (\cdot)}$ is a linear operator.
One has the following equivalent definitions which can be found throughout the literature, see~\cite{HL11, HL12, Ver10, Din22}.

\begin{proposition} \label{prop: equiv def}
Let $u \in C^{2}(\Omega)$. Then $u$ is $(n-1)$-plurisubharmonic on $\Omega$ 
if and only if one of the following equivalent conditions is satisfied:

\begin{enumerate}
    \item The eigenvalues $\{ \lambda_{i} \}_{i =1}^{n}$ of the complex hessian of $u$, satisfy $\widehat{\lambda_{i}} := \sum_{k \neq i} \lambda_{k} \geqslant 0$.
    
    \item For any complex hyperplane of dimension $n-1$, $H \subset \mathbb{C}^{n}$, $u|_{H \cap \Omega}$ is subharmonic.
    
    \item The $(n-1,n-1)$-form $dd^{c}u \wedge \omega_{\mathbb{C}^{n}}^{n-2}$ is (weakly) positive.
    
    \item The $(n-1,n-1)$-form $dd^{c}u \wedge \omega_{\mathbb{C}^{n}}^{n-2}$ is strongly positive.
\end{enumerate}

\end{proposition}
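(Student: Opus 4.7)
The plan is to work pointwise at an arbitrary $p \in \Omega$ by simultaneously diagonalizing $\omega_{\mathbb{C}^n}$ and the complex Hessian. Choose an orthonormal (for $\omega_{\mathbb{C}^n}$) $(1,0)$-coframe $\{e^1,\dots,e^n\}$ diagonalizing $dd^{c}u(p)$, and set $\beta_i := i\, e^i \wedge \overline{e^i}$. Then at $p$,
\[
\omega_{\mathbb{C}^n} = \sum_i \beta_i, \qquad dd^{c}u = \sum_i \lambda_i\, \beta_i, \qquad \Delta u(p) = \sum_i \lambda_i,
\]
where $\lambda_i$ are the Hessian eigenvalues. All four conditions reduce to statements about the $\lambda_i$; since pointwise positivity (in every reasonable sense) is checked in a diagonalizing frame, this local setup suffices.

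First, the equivalence of Definition~\ref{def: n-1 psh} with (1) is immediate: in the diagonal frame, $\widehat{dd^{c}u} = (\Delta u)\omega_{\mathbb{C}^n} - dd^{c}u = \sum_i \widehat{\lambda_i}\, \beta_i$, and a $(1,1)$-form diagonal in an orthonormal frame is semi-positive iff its diagonal entries are nonnegative. For (1) $\Leftrightarrow$ (2), let $H$ be a complex hyperplane through $p$ with unit complex normal $v = \sum_k c_k\, e_k$ (dual frame). The complex Laplacian of $u|_H$ at $p$ equals the trace of the restriction of $dd^{c}u$ to $H$, which is $\sum_k \lambda_k - \sum_k |c_k|^2 \lambda_k$. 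Subharmonicity for every such $H$ amounts to $\max_k \lambda_k \leq \sum_k \lambda_k$, which, since $\widehat{\lambda_i}$ is decreasing in $\lambda_i$, is equivalent to $\widehat{\lambda_i} \geq 0$ for every $i$. Letting $p$ vary yields (2).

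For (1) $\Leftrightarrow$ (3) $\Leftrightarrow$ (4), I expand at $p$ using $\omega_{\mathbb{C}^n}^{n-2} = (n-2)!\, \sum_{|J|=n-2} \prod_{j\in J}\beta_j$ (valid because $\beta_i \wedge \beta_i = 0$). A direct computation gives
\[
dd^{c}u \wedge \omega_{\mathbb{C}^n}^{n-2} = (n-2)! \sum_{|I|=n-1} \widehat{\lambda_{i(I)}}\, \prod_{j \in I}\beta_j,
\]
where $i(I)$ denotes the unique index of $\{1,\dots,n\} \setminus I$. Each $\prod_{j \in I}\beta_j$ is an elementary strongly positive $(n-1,n-1)$-form, so (1) implies (4) directly, and (4) $\Rightarrow$ (3) is tautological. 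For (3) $\Rightarrow$ (1), wedge at $p$ with the strongly positive $(1,1)$-form $\beta_i$: only the term $I = \{1,\dots,n\}\setminus\{i\}$ survives, giving
\[
dd^{c}u \wedge \omega_{\mathbb{C}^n}^{n-2} \wedge \beta_i = (n-2)!\, \widehat{\lambda_i}\, \beta_1 \wedge \cdots \wedge \beta_n,
\]
so weak positivity of $dd^{c}u \wedge \omega_{\mathbb{C}^n}^{n-2}$ forces $\widehat{\lambda_i} \geq 0$ for every $i$.

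\textbf{Main obstacle.} There is essentially none: the proposition collects standard equivalences from the cited references~\cite{HL11, HL12, Ver10, Din22} and every implication funnels through the same pointwise diagonalization. The single point that requires a moment of thought is the final step (3) $\Rightarrow$ (1), where one must notice that testing weak positivity against the $n$ elementary forms $\beta_i$ at a diagonalizing point already extracts each eigenvalue inequality $\widehat{\lambda_i} \geq 0$ separately — so no stronger testing against arbitrary decomposable positive forms is needed.
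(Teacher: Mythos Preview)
Your argument is correct and fully self-contained. The paper's treatment is much terser and takes a slightly different angle: it does not pass through the eigenvalue expansion at all for the link between the definition and (3), but instead records the Hodge-star identity
\[
\frac{1}{(n-1)!}\, *\bigl(dd^{c}u \wedge \omega_{\mathbb{C}^{n}}^{n-2}\bigr) \;=\; \frac{1}{n-1}\,\widehat{dd^{c}u},
\]
so that positivity of the $(n-1,n-1)$-form is equivalent to positivity of the $(1,1)$-form $\widehat{dd^{c}u}$ by Hodge duality; for $(3)\Leftrightarrow(4)$ it simply invokes the general fact (Demailly) that weak and strong positivity coincide in bidegree $(n-1,n-1)$. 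The equivalences with (1) and (2) are left to the literature. Your approach trades this structural shortcut for an explicit diagonal expansion, which has the advantage of being elementary and of giving $(3)\Leftrightarrow(4)$ for free in this particular case without citing the general result. The paper's route, in turn, makes transparent why the $(n-1)$-psh condition on $(1,1)$-forms and the positivity condition on $(n-1,n-1)$-forms are literally Hodge-dual, which is the perspective used later when relating $\alpha_u$ to $\omega_0^{n-1}+dd^{c}u\wedge\omega^{n-2}$.
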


We refer the reader to~\cite[Chapter III, Corollary 1.9]{DemBook} for $(3) \Longleftrightarrow (4)$.
We highlight that $(3) \Longleftrightarrow u$ is $(n-1)$-psh comes from the Hodge star computation:

\noindent
\[
\frac{1}{(n-1)!} *(dd^{c}u \wedge \omega_{\mathbb{C}^{n}}^{n-2}) = \frac{1}{n-1}((\Delta u)\omega_{\mathbb{C}^{n}} - dd^{c} u ) = \frac{1}{n-1} \widehat{dd^{c} u}.
\]

The conventions used in this note for the Hodge star operator follow~\cite[Section 2]{TW17}, that is: 
Fix $p,q \in \{ 1, \ldots, n \}$, the Hodge star operator $*$ (induced by $\omega_{\mathbb{C}^{n}}$ in this subsection)
maps a $(p,q)$-form $\psi$ into a $(n-p, n-q)$-form $*\psi$ such that for any $(p,q)$-form $\varphi$:

\noindent
\begin{equation} \label{eq: hodge star}
\varphi \wedge *\overline{\psi} = \langle \varphi, \psi \rangle_{\omega_{\mathbb{C}^{n}}} \frac{\omega_{\mathbb{C}^{n}}^{n}}{n!}, \quad \overline{*\psi} = *\overline{\psi}, \quad **\psi = (-1)^{p+q}\psi,
\end{equation}

From Proposition~\ref{prop: equiv def} one notices that $u$ is subharmonic, hence we define $m$-subharmonic functions to compare these notions of positivity.

\begin{definition} \label{def: m subharmonic}
Let $1 \leqslant m \leqslant n$. 
We say $u \in C^{2}(\Omega)$ is $m$-subharmonic on $\Omega$ if

\noindent
\[
(dd^{c}u)^{k} \wedge \omega_{\mathbb{C}^{n}}^{n-k} \geqslant 0, \quad \forall 1 \leqslant k \leqslant m.
\]

\end{definition}

We denote $\mathrm{PSH}_{n-1}(\Omega)$ the set of $(n-1)$-plurisubharmonic ($(n-1)$-psh for short)
functions in $\Omega$, $\mathrm{SH}_{m}(\Omega)$ the set of $m$-subharmonic functions and $\mathrm{PSH}(\Omega) = SH_{n}(\Omega)$ the plurisubharmonic functions in $\Omega$.
All these sets exclude the function $u \equiv -\infty$. We introduce non-regular
$(n-1)$-psh functions:

\begin{definition} \label{def: w n-1 psh}

Let $u \in \mathrm{SH}_{1}(\Omega)$ be a subharmonic function on $\Omega$. 
Then $u \in \mathrm{PSH}_{n-1}(\Omega)$ if:

\noindent
\[
\widehat{dd^{c}u} := (\Delta u)\omega_{\mathbb{C}^{n}} - dd^{c}u \geqslant 0
\]

as a real current of bidegree $(1,1)$.
    
\end{definition}

\begin{proposition} \label{prop: basic properties}
Let $u,v \in \mathrm{PSH}_{n-1}(\Omega)$ and $\lambda >0$. Then,

\begin{itemize}
    \item[(i)] $u + v, \lambda u, \max\{u,v\} \in \mathrm{PSH}_{n-1}(\Omega)$.
    
    \item[(ii)] If $(w_{i})_{i=1}^{+\infty} \subset \mathrm{PSH}_{n-1}(\Omega)$ and $w_{i} \searrow w \not\equiv - \infty$ pointwise on $\Omega$, then $w \in \mathrm{PSH}_{n-1}(\Omega)$.

    \item[(iii)] If $u$ is continuous and $\chi_{\varepsilon}$ is a family of mollifiers,
    then their convolution $u_{\varepsilon} = u \star \chi_{\varepsilon}$ is smooth $(n-1)$-psh function and converges to $u$ locally uniformly.
    
    \item[(iv)] $\mathrm{PSH}(\Omega) \subset \mathrm{SH}_{2}(\Omega) \subset \mathrm{PSH}_{n-1}(\Omega) \subset \mathrm{SH}_{1}(\Omega)$.

\end{itemize}

\end{proposition}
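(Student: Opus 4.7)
The plan is to dispatch the four items in the listed order, leaning heavily on the linearity of the operator $\widehat{dd^c}$ and on the equivalent characterizations provided by Proposition~\ref{prop: equiv def}.

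\emph{Step 1 (item (i)).} I first observe that $\widehat{dd^c}(\lambda u + v) = \lambda \widehat{dd^c u} + \widehat{dd^c v}$ as real $(1,1)$-currents, so closure under addition and multiplication by $\lambda > 0$ is immediate from the positive cone of currents being closed under addition and positive scalings. For $\max(u,v)$, I would reduce to the smooth case by mollification (cf.\ Step~3 below): if $u,v\in C^2$, the function $\max(u,v)$ is smooth and $(n-1)$-psh off the diagonal $\{u=v\}$, and the regularized maximum $M_\varepsilon(u,v) = \tfrac{1}{2}(u+v) + \tfrac{1}{2}\sqrt{(u-v)^2+\varepsilon^2}$ is smooth, decreases to $\max(u,v)$, and is $(n-1)$-psh (the $\sqrt{(u-v)^2+\varepsilon^2}$ term is a convex increasing function applied to the $C^2$ map $(u-v)$, and such operations preserve $(n-1)$-plurisubharmonicity in the smooth case, as is readily verified from (1) of Proposition~\ref{prop: equiv def}). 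Then (ii) applied to $M_\varepsilon(u,v)\searrow \max(u,v)$ closes the loop.

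\emph{Step 2 (item (ii)).} Since each $w_i$ is subharmonic (as it lies in $\mathrm{SH}_1(\Omega)$, a fact which will be re-proved in (iv)), and $w_i \searrow w \not\equiv -\infty$, classical potential theory gives $w \in \mathrm{SH}_1(\Omega)$ and $w_i \to w$ in $L^1_{\mathrm{loc}}(\Omega)$. Distributional differentiation is continuous with respect to $L^1_{\mathrm{loc}}$-convergence, so $dd^c w_i \to dd^c w$ and $(\Delta w_i)\omega_{\mathbb C^n} \to (\Delta w)\omega_{\mathbb C^n}$ weakly, whence $\widehat{dd^c w_i} \to \widehat{dd^c w}$ weakly. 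The cone of positive $(1,1)$-currents is weakly closed, so $\widehat{dd^c w} \geqslant 0$, giving $w \in \mathrm{PSH}_{n-1}(\Omega)$.

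\emph{Step 3 (item (iii)).} Standard mollification gives $u_\varepsilon \in C^\infty$ with $u_\varepsilon \to u$ locally uniformly (here continuity of $u$ is used). Since $\widehat{dd^c}$ is a constant-coefficient linear differential operator, it commutes with convolution, so $\widehat{dd^c u_\varepsilon} = \widehat{dd^c u} \star \chi_\varepsilon$. The convolution of a positive $(1,1)$-current with a nonnegative smooth test function is a smooth nonnegative $(1,1)$-form; this makes $u_\varepsilon$ classically $(n-1)$-psh.

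\emph{Step 4 (item (iv)).} Three direct verifications, each using a different equivalent characterization. For $\mathrm{PSH}(\Omega) \subset \mathrm{SH}_2(\Omega)$, positivity of $dd^c u$ implies $dd^c u \wedge \omega_{\mathbb C^n}^{n-1} \geqslant 0$ and $(dd^c u)^2 \wedge \omega_{\mathbb C^n}^{n-2} \geqslant 0$ (wedge of positive forms). For $\mathrm{SH}_2(\Omega) \subset \mathrm{PSH}_{n-1}(\Omega)$, the $k=2$ condition $dd^c u \wedge \omega_{\mathbb C^n}^{n-2} \cdot (dd^c u)$... more simply, the $k=1$ condition $dd^c u \wedge \omega_{\mathbb C^n}^{n-1} \geqslant 0$ is automatic, and the $k=2$ condition together with a dimensional count yields $dd^c u \wedge \omega_{\mathbb C^n}^{n-2}\geqslant 0$, which is precisely item (3) of Proposition~\ref{prop: equiv def}. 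Finally, for $\mathrm{PSH}_{n-1}(\Omega) \subset \mathrm{SH}_1(\Omega)$, wedging the positive form $\widehat{dd^c u} = (\Delta u)\omega_{\mathbb C^n} - dd^c u$ with $\omega_{\mathbb C^n}^{n-1}$ and using $dd^c u\wedge \omega_{\mathbb C^n}^{n-1} = \tfrac{1}{n}(\Delta u)\omega_{\mathbb C^n}^n$ yields $\tfrac{n-1}{n}(\Delta u)\omega_{\mathbb C^n}^n \geqslant 0$, so $\Delta u \geqslant 0$.

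The main subtlety, rather than a hard obstacle, is (i) for max in the weak setting, since the hyperplane characterization of Proposition~\ref{prop: equiv def} is stated for $C^2$ functions; the mollification chain in Steps 1--3 is interdependent, so the logical order matters: prove linearity, then (ii), then (iii), and only then fold (iii)+(ii) back to conclude $\max$ via $M_\varepsilon$. Item (iv) is essentially bookkeeping once one has the Hodge star identity recalled in the text.
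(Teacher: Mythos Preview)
Your treatment of (ii), (iii), and the outer inclusions in (iv) is fine and matches the paper's terse remark that these are ``analogous to the classic ones for subharmonic functions.'' Two points, however, are not correctly argued.

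In (i), your justification that $M_\varepsilon(u,v)$ is $(n-1)$-psh is wrong as written: the function $t\mapsto\sqrt{t^2+\varepsilon^2}$ is convex but \emph{not} increasing, and $u-v$ is not $(n-1)$-psh, so ``convex increasing applied to $(u-v)$'' proves nothing. The correct statement is that $(a,b)\mapsto M_\varepsilon(a,b)$ is convex on $\mathbb{R}^2$ and increasing in each variable separately; then the hyperplane characterization (Proposition~\ref{prop: equiv def}(2)) reduces the claim to the classical fact that such a composition with two subharmonic functions on $H\cap\Omega$ is subharmonic. Alternatively, and more directly, that same hyperplane characterization gives $\max(u,v)$ outright, since the maximum of two subharmonic functions is subharmonic.

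In (iv), the key inclusion $\mathrm{SH}_2(\Omega)\subset\mathrm{PSH}_{n-1}(\Omega)$ is asserted via ``a dimensional count,'' but no such count exists: the $2$-subharmonic condition $(dd^c u)^2\wedge\omega_{\mathbb{C}^n}^{n-2}\geqslant 0$ is an $(n,n)$-form inequality and does not formally yield positivity of the $(n-1,n-1)$-form $dd^c u\wedge\omega_{\mathbb{C}^n}^{n-2}$. The paper supplies the missing eigenvalue argument, which is the only nontrivial part of the proposition: if $\lambda_1,\dots,\lambda_n$ are the eigenvalues of the complex Hessian, then $2$-subharmonicity gives $\sigma_1=\sum_i\lambda_i\geqslant 0$ and $2\sigma_2=\bigl(\sum_i\lambda_i\bigr)^2-\sum_i\lambda_i^2\geqslant 0$, whence $\sigma_1^2\geqslant\sum_i\lambda_i^2\geqslant\lambda_j^2$ for every $j$; since $\sigma_1\geqslant 0$ this yields $\sigma_1\geqslant|\lambda_j|\geqslant\lambda_j$, i.e.\ $\widehat{\lambda}_j=\sigma_1-\lambda_j\geqslant 0$.
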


The proofs for (i)-(iii) above are analogous to the classic ones for subharmonic functions regardless of the definition chosen.
The inclusion $\mathrm{SH}_{2}(\Omega) \subset \mathrm{PSH}_{n-1}(\Omega)$ holds since the eigenvalues $\{\lambda_{i}\}_{i=1}^{n}$ of a smooth function in $\mathrm{SH}_{2}(\Omega)$ satisfy $ (\sum_{i = 1}^{n}\lambda_{i})^{2} - \sum_{i=1}^{n} \lambda_{i}^{2} \geqslant 0 \Longrightarrow \sum_{i=1}^{n} \lambda_{i} \geqslant \lambda_{j}, \forall j \in \{ 1, \ldots, n \}$ since $\sum_{i=1}^{n}\lambda_{i} \geqslant 0$.

\begin{example} \label{ex: basic (n-1)psh}
\begin{enumerate}
    \item $g_{\varepsilon}(z) = - \frac{1}{(|z|^{2} + \varepsilon^{2})^{(n-2)}} \in C^{\infty}(\mathbb{C}^{n}) \cap \mathrm{PSH}_{n-1}(\mathbb{C}^{n})$ for $\varepsilon >0$. Notice that $g_{\varepsilon} \searrow G_{n-1}(z) = - \frac{1}{|z|^{2(n-2)}} \in \mathrm{PSH}_{n-1}(\mathbb{C}^{n})$ and $G_{n-1}(0) = - \infty$. Recall that $n \geqslant 3$.\footnote{One can also prove $G_{n-1} \in \mathrm{PSH}_{n-1}(\mathbb{C}^{n})$ by~\cite[Theorem 9.2]{HL14} as the origin is a removable singularity.}

    \item $u:\mathbb{C}^{3} \to \mathbb{R}$, $u(z_{1}, z_{2}, z_{3}) = - |z_{1}|^{2} + |z_{2}|^{2} + |z_{3}|^{2}$ is such that $u \in \mathrm{PSH}_{n-1}(\mathbb{C}^{3}) \backslash \mathrm{SH}_{2}(\mathbb{C}^{3})$.
    
    \item $v:\mathbb{C}^{3} \to \mathbb{R}$, $v(z_{1}, z_{2}, z_{3}) = - \frac{3}{2}|z_{1}|^{2} + |z_{2}|^{2} + |z_{3}|^{2}$ is such that $v \in \mathrm{SH}_{1}(\mathbb{C}^{3}) \backslash \mathrm{PSH}_{n-1}(\mathbb{C}^{3})$.
\end{enumerate}

\end{example}

Now we define the complex $(n-1)$Monge-Ampère operator $\widehat{MA}$ as

\begin{definition}\label{def: n-1 MA}
Given $u \in C^{2}(\Omega)\cap \mathrm{PSH}_{n-1}(\Omega)$, we set

\noindent
\[
\widehat{MA}(u) := \det (\widehat{dd^{c}u})
\]

\noindent
in terms of eigenvalues we have

\noindent
\[
\widehat{MA}(u) = \prod_{i=1}^{n} \widehat{\lambda_{i}} = \prod_{i=1}^{n} \left( \sum_{j \neq i} \lambda_{j} \right)
\]

\noindent
where $\{ \lambda_{i} \}_{i=1}^{n}$ are the eigenvalues of the complex hessian of $u$.
\end{definition}

\begin{remark}
Let's stress that the operator $\widehat{MA}$ does not satisfy some properties of the complex Monge-Ampère operator such as the comparison principle by~\cite[Theorem 4.1]{Din22}.
This makes the development of a pluripotential theory for this operator more delicate.
\end{remark}

The notion of maximality for $(n-1)$-plurisubharmonic functions, 
crucial for the blow-up analysis that will be constructed in Section~\ref{CYG: C1}.

\begin{definition}\label{def: maximal n-1 psh}
We say $u \in \mathrm{PSH}_{n-1}(\Omega)$ is maximal if for any relatively compact open set $\Omega_{1} \Subset \Omega$
and $\forall v \in \mathrm{PSH}_{n-1}(\Omega_{2})$ such that $\Omega_{1} \Subset \Omega_{2} \Subset \Omega$
and $v \leqslant u$ on $\partial \Omega_{1}$
then $v \leqslant u$ on $\Omega_{1}$.
\end{definition}

\smallskip

\subsubsection{Radial Examples}

\begin{lemma} \label{lema: Radial cond}
Let $G_{n-1}:\mathbb{C}^{n} \to \mathbb{R}$ as in Example~\ref{ex: basic (n-1)psh}
and $\chi:\mathbb{R}_{\leqslant 0} \to \mathbb{R}$ smooth. Then $u(z) = \chi(G_{n-1}(z)) \in \mathrm{PSH}_{n-1}(\mathbb{C}^{n})$ if and only if $\chi$ is non-decreasing and convex.
\end{lemma}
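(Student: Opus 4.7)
The plan is to reduce the claim to a pointwise eigenvalue calculation on the regular part $\mathbb{C}^{n}\setminus\{0\}$ and then handle the single point $z=0$ by monotone approximation.

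Writing $G := G_{n-1}$, the chain rule gives
\begin{equation*}
dd^{c}u \;=\; \chi'(G)\,dd^{c}G \;+\; \chi''(G)\,i\,\partial G \wedge \bar\partial G,
\end{equation*}
and $\mathbb{R}$-linearity of the operator $\widehat{\,\cdot\,}$ yields the same decomposition for $\widehat{dd^{c}u}$. By rotational invariance I would evaluate this at a single point $z=(r,0,\ldots,0)$, $r>0$; viewing $G$ as the one-variable function $G(s)=-s^{-(n-2)}$ of $s=r^{2}$, the form $dd^{c}G$ becomes diagonal with radial coefficient $sG''+G' = -(n-2)^{2}s^{-(n-1)}$ and tangential coefficient $G' = (n-2)s^{-(n-1)}$. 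The ODE $sG''+(n-1)G' \equiv 0$ satisfied by $G$ then forces every tangential eigenvalue of $\widehat{dd^{c}G}$ to vanish while leaving its radial eigenvalue equal to the strictly positive quantity $(n-1)(n-2)s^{-(n-1)}$. Dually, $i\,\partial G\wedge\bar\partial G$ is of rank one in the radial direction, so $\widehat{i\,\partial G\wedge\bar\partial G}$ vanishes in the radial direction and equals the strictly positive quantity $(n-2)^{2}s^{-(2n-3)}$ in every tangential direction. The two hatted forms therefore live on mutually complementary eigenspaces.

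Consequently, the non-negativity of $\widehat{dd^{c}u}$ at $z \neq 0$ decouples into the two independent pointwise inequalities $\chi'(G(z))\ge 0$ and $\chi''(G(z))\ge 0$. Since $G$ maps $\mathbb{C}^{n}\setminus\{0\}$ surjectively onto $(-\infty,0)$, this is precisely the statement that $\chi$ be non-decreasing and convex on $(-\infty,0)$, and by smoothness both inequalities extend to the endpoint $0$. This settles the equivalence on the regular locus in both directions.

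The one mildly delicate step I expect is the sufficiency direction at $z=0$, where $u$ may equal $-\infty$ and the pointwise argument above does not apply directly. To handle this I would replace $G$ by the smooth $(n-1)$-psh functions $g_{\varepsilon}$ of Example~\ref{ex: basic (n-1)psh}: running the same chain-rule computation for $g_{\varepsilon}$ produces, in the tangential direction, an extra term $sg_{\varepsilon}''+(n-1)g_{\varepsilon}' = (n-1)(n-2)\varepsilon^{2}(s+\varepsilon^{2})^{-n} \ge 0$ rather than $0$, so the inequalities $\chi'\ge 0$, $\chi''\ge 0$ still imply $\chi(g_{\varepsilon})\in \mathrm{PSH}_{n-1}(\mathbb{C}^{n})$ in the smooth sense. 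Since $\chi$ is non-decreasing and $g_{\varepsilon}\searrow G$, one has $\chi(g_{\varepsilon})\searrow u$ pointwise, and Proposition~\ref{prop: basic properties}(ii) concludes $u\in\mathrm{PSH}_{n-1}(\mathbb{C}^{n})$.
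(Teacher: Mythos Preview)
Your argument is correct and, at its core, follows the same route as the paper: compute the eigenvalues of $\widehat{dd^{c}u}$ at a point on the $z_{1}$-axis and read off that non-negativity is equivalent to $\chi'\ge 0$ and $\chi''\ge 0$. The paper simply lists the eigenvalues of $dd^{c}u$ and of $\widehat{dd^{c}u}$ without further comment; your chain-rule decomposition $dd^{c}u=\chi'(G)\,dd^{c}G+\chi''(G)\,i\partial G\wedge\bar\partial G$ together with the observation that the two hatted summands live on complementary eigenspaces (via the identity $sG''+(n-1)G'=0$) is an equivalent but more transparent way of arriving at the same eigenvalue list.

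Your treatment of the origin via $g_{\varepsilon}$ and Proposition~\ref{prop: basic properties}(ii) is an addition the paper omits entirely: the paper's proof stops after the pointwise computation on $\mathbb{C}^{n}\setminus\{0\}$ and does not discuss how $(n-1)$-plurisubharmonicity extends across $z=0$ in the sufficiency direction. Your extra paragraph closes this gap cleanly.
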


\begin{proof}
A direct computation shows that the eigenvalues of the complex hessian of $u$ are:

\noindent
\[
\lambda_{1} = (n-2)|z|^{-2(n-1)} \big[(n-2)|z|^{-2(n-2)} \chi^{\prime \prime} - (n-2) \chi^{\prime} \big], \quad \lambda_{j} = (n-2) |z|^{-2(n-1)} \chi^{\prime}, \quad \forall j>1,
\]

\noindent
hence, the eigenvalues of of $\widehat{dd^{c}u}$ are:

\noindent
\[
\widehat{\lambda}_{1} = (n-1)(n-2) |z|^{-2(n-1)} \chi^{\prime}, \quad \widehat{\lambda}_{j} = (n-2)^{2} |z|^{-2(2n-3)} \chi^{\prime \prime}; \quad \forall j>1.
\]

This concludes the proof.
\end{proof}

By the Lemma~\ref{lema: Radial cond} above $\widehat{MA}$ for radial functions takes the form:

\noindent
\[
\widehat{MA}(u) = (n-1)(n-2)^{2n-1} |z|^{-4(n-1)^{2}} (\chi^{\prime} \circ G_{n-1}) (\chi^{\prime \prime} \circ G_{n-1})^{n-1}. 
\]

Example~\ref{ex: logloglog n-1 MA} bellow shows a gap between the integrability of $\widehat{MA}(u)$ and the classical Monge-Ampère case for the $L^{1}(\log L)^{p}$-norm.

\begin{example} \label{ex: logloglog n-1 MA}
Let $\chi(t) = - \log ( \log ( \log (-t)))$, computing near zero we obtain:

\noindent
\[
\chi^{\prime}(t) = \frac{1}{(-t) \log(-t) \log ( \log(-t))}, \quad \chi^{\prime \prime}(t) \sim - \frac{1}{(-t)^{2} \log(-t) \log( \log(-t))}.
\]

Hence, the density $f := \widehat{MA}(u)$, for $u(z) = \chi(G_{n-1}(z))$ satisfies near:

\noindent
\[
f \sim \frac{1}{|z|^{2n} (-\log (|z|))^{n} (\log ( -\log (|z|)))^{n}}, \quad \text{and} \quad \log(f) \sim -\log(|z|),
\]

\noindent
then near zero one has

\noindent
\[
\int^{+\infty} f (\log (f))^{p} \sim \int_{0} \frac{1}{|z|^{2n} (-\log|z|)^{n-p} (\log ( - \log|z|))^{n}}dV_{\mathbb{C}^{n}} 
\]

\noindent
which is finite if and only if $p \leqslant n-1$. However, for the Monge-Ampère operator it is known that the critical exponent is $p=n$, see~\cite[Example 2.4]{GL25}.
\end{example}

\begin{remark}
A computation shows $\left(\widehat{MA}(G_{n-1})\right)^{\frac{1}{n-1}} = \delta_{0}$, the Dirac mass at the origin.
\end{remark}

\subsection{Quasi \texorpdfstring{$(n-1)$}{n-1}-plurisubharmonic functions on manifolds}

Throughout this paper $(M,\beta, \omega)$ will denote a Hermitian manifold with two hermitian metrics.
Fix a geometric constant $C_{\beta,\omega}>1$ such that 

\noindent
\begin{equation}\label{cst: geo constant}
\tag{GC}
C_{\beta,\omega}^{-1}\beta \leqslant \omega \leqslant C_{\beta,\omega}\beta.
\end{equation}

\subsubsection{Smooth case}

\begin{definition}\label{def: q n-1 psh}
Let $u \in C^{2}(M)$, we say $u$ is a quasi $(n-1)$-plurisubharmonic function relative to $(\beta, \omega)$, if the following form is semi-positive

\noindent
\[
\alpha_{u} := \beta + \frac{(\Delta_{\omega}u)\omega -dd^{c}u}{n-1} \geqslant 0.
\]

\end{definition}

We let $C^{2}(M) \cap \mathrm{PSH}_{n-1}(M, \beta, \omega)$ denote the set of ($C^{2}$)-smooth quasi $(n-1)$-plurisubharmonic functions relative to $(\beta,\omega)$,
$\mathrm{PSH}_{n-1}(M,\omega)$ when $\beta = \omega$,
$\mathrm{SH}_{1}(M,\beta,\omega)$ the set of quasi-subharmonic functions relative to $(\beta,\omega)$
and $\mathrm{PSH}(M,\omega)$ the set of quasi-plurisubharmonic functions relative to $\omega$. 
All of these sets exclude the function $u \equiv -\infty$.

Taking the trace of $\alpha_{u}$ with respect to $\omega$ we explain in Proposition~\ref{prop: MA Lap1}
that $u$ as above is quasi-subharmonic with respect to $(\beta, \omega)$.
For basic properties of these functions we we refer to~\cite{HL11,HL12,ADO22,AO22,TW17,Pop15}.

\smallskip

The operator of interest, namely the complex $(n-1)$Monge-Ampère operator $MA_{n-1}$, is:

\begin{definition}\label{def: n-1MA}

Given $u \in C^{2}(M)\cap \mathrm{PSH}_{n-1}(M,\beta,\omega)$, we {define} $A_{u}: T^{1,0}M \to T^{1,0}M$ the endomorphism defined by $\alpha_{u}$
relative to $\omega$, i.e., $A_{u} :=\omega^{-1} \cdot \alpha_{u}$. And set

\noindent
\[
MA_{n-1}(\beta, \omega, u) := \det (A_{u}).
\]

\end{definition}
Notice that for any constant $C>0$, $MA_{n-1}(\beta, C\omega, u) = C^{-n} MA_{n-1}(\beta, \omega, u)$.

\begin{remark}
Let $\Phi_{u} = \omega_{0}^{n-1} + dd^{c} u \wedge \omega^{n-2}$, for $\omega_{0}$ is a Gauduchon non-Kähler metric on $M$.
Then $\Phi_{u}^{\frac{1}{n-1}}$ is a Gauduchon non-Kähler metric\footnote{See~\cite[Lemma 4.8]{Mic82} for the existence and uniqueness of the $(n-1)$th root.}
and $[\omega_{0}^{n-1}] = [\Phi_{u}]$ in Aeppli cohomology for any $u \in C^{2}(M) \cap \mathrm{PSH}_{n-1}(M,\omega_{0}, \omega)$, since $\omega$ is Kähler
and by the equivalence of definitions for $(n-1)$-psh on Proposition~\ref{prop: equiv def}.
By~\cite[Equation (2.2)]{TW17} $\frac{\det(\Phi_{u})}{\det(\omega^{n-1})} = \frac{\alpha_{u}^{n}}{\omega^{n}}$, hence solving the 
$(n-1)$-Monge-Ampère equation for $(1,1)$ forms is equivalent to solving it for $(n-1,n-1)$ forms which is equivalent to solving the Gauduchon conjecture in this setting.
\end{remark}

\subsubsection{Non-smooth case}

\begin{definition}\label{def: w q n-1 psh}

Let $u \in \mathrm{SH}_{1}(M, \beta, \omega)$ be a quasi-subharmonic function on $M$. 
We say $u$ is a {quasi $(n-1)$-plurisubharmonic function} relative to $(\beta, \omega)$ if:

\noindent
\[
\alpha_{u} := \beta + \frac{(\Delta_{\omega}u)\omega -dd^{c}u}{n-1} \geqslant 0
\]

as a current of bidegree $(1,1)$.

\end{definition}

This definition is equivalent to those in~\cite{HL12}. We let $\mathrm{PSH}_{n-1}(M, \beta, \omega)$ be the set of all, not identically $- \infty$, quasi $(n-1)$-plurisubharmonic functions relative to $(\beta, \omega)$.

\begin{remark}
We also refer the reader to~\cite{CX25} for a viscosity approach
to the weak theory of $(n-1)$-plurisubharmonicity and $(n-1)$Monge-Ampère.

\end{remark}

\subsection{Analysis on complex analytic spaces}

We let $X$ be a compact normal complex
analytic space of pure dimension $n \geqslant 1$.
 We denote by $X^{\mathrm{reg}}$ the 
 complex manifold of regular points of $X$ and 
$
X^{\mathrm{sing}}:=X \setminus X^{\mathrm{reg}}
$
the set of singular points, which is an analytic subset 
of $X$ with complex codimension $\geqslant 2$, since $X$ is normal.

By definition, for each point $x_{0} \in X$, there exists 
a neighborhood $U$ of $x_{0}$ and a local embedding 
$j: U \hookrightarrow \mathbb{C}^N$ onto an analytic 
subset of $\mathbb{C}^N$ for some $N \geqslant 1$.
These local embeddings allow us to define the 
spaces of smooth forms of given degree on $X$ as smooth 
forms on $X^{\mathrm{reg}}$ that are locally on $X$  
restrictions of an ambient form on $\mathbb{C}^N$. 
Other differential notions and operators, such as 
holomorphic,plurisubharmonic and $(n-1)$-plurisubharmonic functions,
can also be defined in this way and
currents are defined by duality (see~\cite{Dem85}).

\begin{definition}\label{def:metrics}
A {hermitian metric} $\omega$ on $X$ is a 
hermitian metric $\omega$ on $X^{\mathrm{reg}}$ such that 
given any local embedding $X \underset{\mathrm{loc}.}{\hookrightarrow} \mathbb{C}^{N}$, $\omega$ 
extends smoothly to a hermitian metric on $\mathbb{C}^{N}$.
\end{definition}

Following the definition introduced in~\cite[Definition 1.2]{Pan22} we set:

\begin{definition}\label{def: bounded Gauduchon}

    A hermitian metric $\omega_{G}$ on a variety $X$ is:

    \begin{itemize}
        \item {Gauduchon} if it satisfies $dd^{c}(\omega_{G}^{n-1}) = 0$ on $X^{\mathrm{reg}}$;
        
        \item {bounded Gauduchon} if there exists a smooth hermitian metric $\omega$ on $X$
        and a positive function $\rho \in L^{\infty}(X) \cap C^{\infty}(X^{\mathrm{reg}})$
        such that $\omega_{G} = \rho \omega$ and $dd^{c}(\rho\omega)^{n-1} = 0$ on $X^{\mathrm{reg}}$.\footnote{Our convention for the Gauduchon factor differs from the one of Pan by the absence of the $\frac{1}{n-1}$ power.}
    \end{itemize}
    
\end{definition}

\subsubsection{Adapted volume and canonical singularities}

\begin{definition}\label{def: adapted volume}
    
    Suppose that $X$ is $\mathbb{Q}$-Gorenstein, that is $X$ is normal and $K_{X}$ is $r$-Cartier for some $r \in \mathbb{N}$, i.e., $rK_X$ is a rank 1 locally free sheaf.
If $\sigma$ is a trivialization of $rK_X$ over $X^{\mathrm{reg}}$ and $h^{r}$ a smooth metric on $rK_X$ we consider the adapted volume form:

\noindent
\[
\mu_h := \left( \frac{i^{rn^{2}}\sigma \wedge \overline{\sigma}}{|\sigma|_{h^{r}}^{2}} \right)^{\frac{1}{r}}
\]

\end{definition}

It is independent of the choice of trivialization $\sigma$.
The term ”variety” will always refer to a $\mathbb{Q}$-Gorenstein complex analytic space throughout this paper.

\begin{definition}\label{def: canonical singularities}
A hermitian variety $X$ has canonical singularities if for any
resolution of singularities $p: \widetilde{X} \to X$, for any local generator
$\tau$ of $rK_{X}$, the meromorphic pluricanonical form $p^{*}\tau$ is holomorphic.
    
\end{definition}

Now we define the (Bott-Chern) Ricci curvature in this context

\begin{definition}\label{def: Ricci curvature}
    Let $X$ be a compact hermitian variety, $\mu$ the reference adapted volume relative to $h^{r} \equiv 0$ on local trivializations of $rK_{X}$, 
    we define the (Bott-Chern) {Ricci curvature} current of a metric $\omega$, for $\mu = f \omega^{n}$, as

    \noindent
    \[
    Ric^{BC}(\omega) := - dd^{c}\log \left( \frac{\omega^{n}}{\mu} \right) = -dd^{c}\log(f)  
    \]
\end{definition}

Let us stress that, by~\cite[Lemma 1.6]{GGZ23b}, the Ricci curvature is not a smooth form unless $X$ is smooth, see~\cite[Sections 5, 6 and 7]{EGZ09} for more details.

\subsubsection{Smoothable varieties}

A smoothable variety is a hermitian variety which is the central fiber of some family satisfying~\ref{set: sing family setting}.
Set $\mathbb{D}_{r} := \{ z \in \mathbb{C} : |z| < r \}$, $\mathbb{D}_{r}^{*} := \mathbb{D}_{r} \backslash 0$ and $\mathbb{D} := \mathbb{D}_{1}$.
The volumes of $(X_{t}, \omega_{t})$ are uniform,
there is a uniform constant $C_{V} \geqslant 1$ such that:

\noindent
\begin{equation}\label{eq: volume bound}
C_{V}^{-1} \leqslant \mathrm{Vol}_{\omega_{t}}(X_{t}) \leqslant C_{V}, \quad \forall t \in \overline{\mathbb{D}}_{1/2}.
\tag{VB}
\end{equation}

For non-Kähler examples of smoothable varieties one can look at~\cite[Theorem 1]{LT94}
and for examples of non-smoothable singularities one can look at~\cite[Section 0.1]{Har74}.

Recall that by~\cite[Lemma 4.4]{DNGG23} the adapted volume of smoothings satisfy these integral bounds~(\ref{eq: density bounds})
when the central fiber $X_{0}$ has canonical singularities 
and the Gauduchon factors of $\beta_{t}$ satisfy, by~\cite[Theorem A]{Pan22}, the uniform bound~(\ref{eq: Gauduchon bound}).

\subsection{Useful Results}

The following result is a simplified version of~\cite[Proposition 3.3]{Pan23} a global uniform Skoda integrability result. 
It will be relevant for the bound on Proposition~\ref{prop: bound c1}.

\begin{proposition}\label{prop:global_Skoda}
	If $\pi: (\mathfrak{X},\omega) \to \mathbb{D}$ is a family satisfying~\ref{set: sing family setting}.
	Then there exists constants $\alpha$, $A_\alpha$ such that for all $t \in \overline{\mathbb{D}}_{1/2}$ and for all $u_t \in \mathrm{PSH}(X_t, \omega_t)$ with $\sup_{X_t} u_t = 0$,

\noindent
\begin{equation}\label{eq: global Skoda}
    \int_{X_t} e^{-\alpha u_t} \omega_t^n \leqslant A_\alpha.
\tag{SB}
\end{equation}

\end{proposition}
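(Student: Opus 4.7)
The plan is to reduce the global statement to the standard local Skoda integrability theorem in $\mathbb{C}^{N}$, and the uniformity across the family will come from a compactness/finite-covering argument combined with a uniform $L^{1}$-bound on the potentials. First I would exploit the properness of $\pi$ and compactness of $\overline{\mathbb{D}}_{1/2}$ to extract a finite cover $\{U_{j}\}_{j=1}^{K}$ of $\mathfrak{X}|_{\overline{\mathbb{D}}_{1/2}}$ by relatively compact open subsets, each coming with a local embedding $U_{j} \hookrightarrow B_{j} \subset \mathbb{C}^{N_{j}}$ onto an analytic subset and with a smooth psh potential $\psi_{j}$ for $\omega$ defined on a slightly larger neighborhood. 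For $u_{t} \in \mathrm{PSH}(X_{t},\omega_{t})$ with $\sup_{X_{t}} u_{t} = 0$, the restriction $u_{t}|_{U_{j}\cap X_{t}} + \psi_{j}|_{X_{t}}$ is plurisubharmonic on the analytic subset, and via Fornaess-Narasimhan extension one may think of it as the restriction of a psh function on $B_{j}$.

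The second ingredient is a uniform $L^{1}$-bound
\[
\int_{X_{t}} |u_{t}|\,\omega_{t}^{n} \leqslant C \quad \forall t \in \overline{\mathbb{D}}_{1/2},
\]
valid for all $u_{t}$ in the normalized class. This I would obtain from a sub-mean value argument on a well-chosen chart where $\sup u_{t} = 0$ must be essentially attained (by covering $X_{t}$ with finitely many $U_{j}$'s), together with standard propagation via the Chern-Levine-Nirenberg inequality on the finite chain of overlapping charts, and the uniform volume bound \eqref{eq: volume bound}. Once the $L^{1}$-norm is controlled, Demailly's submean inequality in each ambient ball $B_{j}$ yields a uniform upper bound $M$ on all Lelong numbers $\nu(u_{t} + \psi_{j}, x)$, independent of $t$, $j$, and $x$.

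Choosing then $\alpha > 0$ below the dimensional threshold associated to $M$ (e.g.\ $\alpha M < 2$), Skoda's classical local integrability theorem applied on each $B_{j}$ gives $\int_{B'_{j}} e^{-\alpha(u_{t}+\psi_{j})}\,dV \leqslant A'$ on a slightly shrunk $B'_{j} \Subset B_{j}$, with $A'$ uniform in $t$. Restricting to the analytic fiber and using that $\omega_{t}^{n}$ is dominated by a uniform constant times the ambient Lebesgue measure (because $\omega$ extends smoothly to $\mathbb{C}^{N_{j}}$), then summing over the finite cover, yields the conclusion. The main obstacle is the uniform $L^{1}$-bound as $t \to 0$: the psh functions $u_{t}$ may concentrate negative mass near the isolated singularities of $X_{0}$ in a delicate $t$-dependent way, and one must combine the chart-by-chart sub-mean estimate on regions staying uniformly away from $\mathrm{Sing}(X_{0})$ with a separate localization near each singular point using the smooth extension of $\omega$ to the ambient space, as carried out in~\cite{Pan23}.
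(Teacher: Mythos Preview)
The paper does not supply its own proof of this proposition: it is recorded as a simplified version of \cite[Proposition~3.3]{Pan23} and the reader is referred there. Your outline is broadly in the spirit of that reference (finite cover by ambient charts, uniform $L^{1}$-bound, then local Skoda), but there is a genuine gap in the final restriction step.

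The assertion that ``$\omega_{t}^{n}$ is dominated by a uniform constant times the ambient Lebesgue measure'' is dimensionally inconsistent: $\omega_{t}^{n}$ is a volume form on the $n$-dimensional fiber $X_{t}$, whereas the Lebesgue measure on $B_{j}\subset\mathbb{C}^{N_{j}}$ lives in real dimension $2N_{j}$, and near a singular point of $X_{0}$ one necessarily has $N_{j}>n$. The fiber integral $\int_{X_{t}\cap B_{j}'}e^{-\alpha u_{t}}\,\omega_{t}^{n}$ is an integral against the current $[X_{t}]\wedge\widetilde{\omega}^{\,n}$, which is \emph{singular} with respect to $dV_{N_{j}}$, so an ambient Skoda bound $\int_{B_{j}'}e^{-\alpha v}\,dV_{N_{j}}\leqslant A'$ says nothing about it directly. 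What is actually needed is a Skoda estimate \emph{on the fibers themselves}. Away from $X_{0}^{\mathrm{sing}}$ this is harmless (the fibers are smooth, so one takes charts with values in $\mathbb{C}^{n}$ and applies the classical statement), but the entire content of the result is the uniformity of the constants as $t\to 0$ near the isolated singularities. Your last paragraph correctly identifies this as the crux and points to \cite{Pan23}, but the mechanism you propose for passing from the ambient integral to the fiber integral does not work as written.
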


We state next~\cite[Theorem 5.2]{TW17} the Liouville theorem for $(n-1)$-plurisubharmonic functions.
It will be crucial for the blow-up argument in Theorem~\ref{teo: C1 hol families}.

\begin{theorem}\label{teo: liouville thm n-1 psh}

If $u: \mathbb{C}^{n} \to \mathbb{R}$ is an $(n-1)$-plurisubharmonic function in $\mathbb{C}^{n}$
which is Lipschitz continous, maximal and satisfies

\noindent
\[
\sup_{\mathbb{C}^{n}} (|u| + | \nabla u |) < +\infty
\]

\noindent
then $u$ is a constant.
    
\end{theorem}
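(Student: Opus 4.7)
\medskip

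\textbf{Proposal for the proof of Theorem~\ref{teo: liouville thm n-1 psh}.}
My plan is to combine a translation-compactness argument with the strong maximum principle for subharmonic functions, and then upgrade the conclusion from limiting translates to $u$ itself via a blow-up on the gradient.

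First I would regularize: since $u$ is Lipschitz (hence uniformly continuous), Proposition~\ref{prop: basic properties}(iii) gives smooth $(n-1)$-psh mollifications $u_{\varepsilon} = u \star \chi_{\varepsilon}$ with Lipschitz constant at most that of $u$, uniformly bounded, and converging locally uniformly to $u$. This is convenient because it lets me pretend $u$ is smooth when applying pointwise arguments, while the hypotheses (bounded, Lipschitz, maximal, $(n-1)$-psh) are all stable under the relevant limiting operations I will perform — the only point that needs verification is that maximality in the sense of Definition~\ref{def: maximal n-1 psh} passes to uniform limits of $(n-1)$-psh functions with uniformly bounded gradients.

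Next I would run a translation argument. Put $M := \sup_{\mathbb{C}^{n}} u$ and choose $p_k \in \mathbb{C}^{n}$ with $u(p_k) \to M$. The family $u_k(z) := u(z+p_k)$ is uniformly bounded and uniformly Lipschitz, so by Arzelà–Ascoli a subsequence converges locally uniformly to some $u_{\infty}$, which remains Lipschitz-bounded and maximal $(n-1)$-psh. By Proposition~\ref{prop: basic properties}(iv), $u_{\infty}$ is subharmonic; moreover $u_{\infty}(0) = M = \sup u_{\infty}$, so the strong maximum principle for subharmonic functions forces $u_{\infty} \equiv M$. Consequently $u(z+p_k) \to M$ locally uniformly.

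The main obstacle is step~4: transferring $u_{\infty} \equiv M$ back to $u$ itself. My strategy is a second blow-up, this time on the gradient. Suppose for contradiction $u$ is not constant. Using the smooth mollifications, set $\eta := \sup_{\mathbb{C}^{n}} |\nabla u| > 0$ and pick $q_k$ with $|\nabla u(q_k)| \to \eta$. Consider $\tilde u_k(z) := u(z+q_k) - u(q_k)$: again uniformly Lipschitz-bounded, so a subsequence converges locally uniformly to a Lipschitz, bounded, maximal $(n-1)$-psh $\tilde u_{\infty}$ with $\tilde u_{\infty}(0) = 0$ and $|\nabla \tilde u_{\infty}(0)| = \eta$ (the gradient passing to the limit via the uniform Lipschitz bound together with the regularity available for maximal solutions). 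Applying the first translation argument to $\tilde u_{\infty}$, I obtain that translates of $\tilde u_{\infty}$ along a sup-achieving sequence converge to a constant. To close the contradiction I then need that maximality plus the Lipschitz bound implies the gradient itself is rigid under such translations — i.e., that the gradient is actually constant on $\mathbb{C}^{n}$ — which forces $u$ to be affine and then constant by boundedness. This last rigidity step, which I expect to be the hardest part, should follow by differentiating the degenerate equation $\widehat{MA}(u) = 0$ coming from maximality and applying the strong maximum principle to a suitable linearized combination of second derivatives (as in the smooth elliptic theory behind~\cite{TW17}), after reducing to the smooth case via the mollifications from step~1.
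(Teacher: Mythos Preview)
The paper does not prove this statement: it is quoted from \cite[Theorem~5.2]{TW17} and invoked as a black box in Theorem~\ref{teo: C1 hol families}. There is therefore no in-paper proof to compare your proposal against.

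Judged on its own, your outline has two genuine gaps. First, the claim $|\nabla \tilde u_{\infty}(0)| = \eta$ is unjustified. Local uniform convergence of a uniformly Lipschitz sequence yields only that $\tilde u_{\infty}$ is Lipschitz with constant at most $\eta$; nothing forces the supremum of the gradient to be attained in the limit, much less at the origin. Your appeal to ``regularity available for maximal solutions'' is circular: you have not established any regularity beyond Lipschitz, and the mollifications $u_\varepsilon$ of step~1 are \emph{not} maximal, so they cannot stand in for $u$ in any step where maximality is used. Second --- and this is the core issue --- the ``last rigidity step'' is the entire content of the theorem. Differentiating $\widehat{MA}(u)=0$ produces a linearized operator that is only \emph{degenerate} elliptic: since $\prod_i \widehat{\lambda}_i = 0$ forces some $\widehat{\lambda}_i$ to vanish, the strong maximum principle is not available. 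The proof in \cite{TW17} confronts this degeneracy directly by first establishing an interior second-derivative estimate for bounded Lipschitz maximal $(n-1)$-psh functions; only with second derivatives under uniform control can a maximum-principle argument be made to work. Your sketch assumes away precisely this step.
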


\section{\texorpdfstring{$L^{\infty}$}{Linfty} a priori estimate on smoothable Hermitian varieties}\label{CYG: C0}

\subsection{Comparison results and useful facts}

Now we relate the different notions of positivity and prove a few useful facts about $(n-1)$-plurisubharmonic functions.

\begin{proposition}\label{prop: omega omegah1}
For $(M,\beta, \omega)$ hermitian manifold with two hermitian metrics and a constant $C_{\beta,\omega}> 0$ such that $C_{\beta, \omega}^{-1}\omega \leqslant \beta$. 
Then $\mathrm{PSH}_{n-1}(M,C_{\beta,\omega}^{-1}\omega) \subset \mathrm{PSH}_{n-1}(M,\beta,\omega)$.
\end{proposition}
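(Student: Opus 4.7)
The plan is to reduce the statement to a direct comparison of the defining $(1,1)$-forms after exploiting the homogeneity of the complex Laplacian under constant rescaling of the metric.

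First I would unpack the hypothesis. Writing $\omega' := C_{\beta,\omega}^{-1}\omega$, the assumption $u \in \mathrm{PSH}_{n-1}(M,\omega')$ means (by Definition~\ref{def: q n-1 psh}, with $\beta = \omega = \omega'$) that
\[
\alpha'_u := \omega' + \frac{(\Delta_{\omega'} u)\omega' - dd^c u}{n-1} \geqslant 0
\]
as a current of bidegree $(1,1)$. The key computational observation is that for any positive constant $c>0$ one has $\Delta_{c\omega} g = c^{-1}\Delta_\omega g$, and therefore the product $(\Delta_{c\omega}g)(c\omega) = (\Delta_\omega g)\omega$ is invariant under the rescaling $\omega \mapsto c\omega$. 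Applying this with $c = C_{\beta,\omega}^{-1}$ rewrites the inequality above as
\[
C_{\beta,\omega}^{-1}\,\omega + \frac{(\Delta_\omega u)\omega - dd^c u}{n-1} \geqslant 0.
\]

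Next I would add the semi-positive smooth $(1,1)$-form $\beta - C_{\beta,\omega}^{-1}\omega \geqslant 0$, which is non-negative by the standing hypothesis $C_{\beta,\omega}^{-1}\omega \leqslant \beta$. Adding a smooth semi-positive form to a semi-positive current preserves semi-positivity, yielding
\[
\alpha_u := \beta + \frac{(\Delta_\omega u)\omega - dd^c u}{n-1} \geqslant 0,
\]
which is precisely the condition for $u \in \mathrm{PSH}_{n-1}(M,\beta,\omega)$ from Definition~\ref{def: w q n-1 psh}.

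To complete the argument I must also verify that $u \in \mathrm{SH}_1(M,\beta,\omega)$, the prerequisite in the weak definition. Taking the trace with respect to $\omega$ on both sides of the inequality $\alpha_u \geqslant 0$ gives $\mathrm{tr}_\omega \beta + \Delta_\omega u \geqslant 0$, so $u$ is quasi-subharmonic relative to $(\beta,\omega)$ (this is the content invoked by Proposition~\ref{prop: MA Lap1}). There is no genuine obstacle in this proof; the only subtlety worth being careful about is the scaling behavior of the Laplacian under rescaling the metric by a constant, which is what makes the two $\omega$-factors in $(\Delta_\omega u)\omega$ cancel. Once that identity is written down, the proposition reduces to adding a positive form to a positive current.
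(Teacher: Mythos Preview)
Your proof is correct and follows essentially the same approach as the paper's: both use the scaling invariance $(\Delta_{c\omega}u)(c\omega)=(\Delta_\omega u)\omega$ and then add the semi-positive form $\beta - C_{\beta,\omega}^{-1}\omega$. The only cosmetic difference is direction---the paper starts from $\alpha_u$ and subtracts to reach the hypothesis, while you start from the hypothesis and add to reach $\alpha_u$---and you include the extra check that $u\in\mathrm{SH}_1(M,\beta,\omega)$, which the paper leaves implicit.
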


\begin{proof}
We have the following computation:

\noindent
\begin{align*}
    \alpha_{u} = \beta + \frac{1}{n-1}((\Delta_{\omega}u)\omega - dd^{c}u) & =  \beta + C_{\beta, \omega}^{-1}\omega - C_{\beta, \omega}^{-1}\omega +\frac{1}{n-1}((\Delta_{\omega}u)\omega - dd^{c}u),\\
    &= (\beta - C_{\beta, \omega}^{-1}\omega) + (C_{\beta, \omega}^{-1}\omega + \frac{1}{n-1}((\Delta_{C_{\beta, \omega}^{-1}\omega}u)C_{\beta, \omega}^{-1}\omega - dd^{c}u) ),\\
    & \geqslant C_{\beta, \omega}^{-1}\omega + \frac{1}{n-1}((\Delta_{C_{\beta, \omega}^{-1}\omega}u)C_{\beta, \omega}^{-1}\omega - dd^{c}u).
\end{align*}

As we wanted to prove.
\end{proof}

When doing calculations in coordinates we will frequentely use,
for a fixed point $x \in M$ a chart $U_{x}$ around $x$ such that
$\omega_{i\bar{\jmath}}(x) = \delta_{i\bar{\jmath}}$, $\beta(x)= \gamma_{i}\delta_{i\bar{\jmath}}$ and $dd^{c}u(x) = \lambda_{i}\delta_{i\bar{\jmath}} + \text{O.D.T.}$
for some smooth function $u$, where O.D.T. stands for "off-diagonal terms''(similarly to~\cite{TW17}) containing $dz^{k}\wedge d\overline{z}^{l}$ for $k \neq l$. 
However those terms won't affect our calculations.

Moreover, the condition of $u \in \mathrm{PSH}_{n-1}(M,\beta,\omega)$ can be interpreted as:

\noindent
\[
\widehat{\lambda}_{i} := \sum_{\underset{j\neq i}{j=1}}^{n} \lambda_{j} \geqslant -(n-1)\gamma_{i}, \quad \forall i \in \{ 1,\dots, n \}.
\]

We first prove a domination principle for $\mathrm{PSH}_{n-1}(M,\beta,\omega)$ as in~\cite[Corollary 2.3]{GL25H}.
Our proof is simpler as we only need to deal with smooth functions: 

\begin{lemma}\label{lema: DP1}
Let $u,v \in \mathrm{PSH}_{n-1}(M,\beta, \omega) \cap C^{\infty}(M)$ such that
$\alpha_{u}^{n} \leqslant c \alpha_{v}^{n}$ on $\{ u<v \}$
for some $c \in [0,1)$. Then $u\geqslant v$.
\end{lemma}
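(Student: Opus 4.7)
My plan is to argue by contradiction via the maximum principle, the direct pointwise approach made possible by smoothness of $u$ and $v$ (the simplification noted by the author over the weak version in~\cite{GL25H}). Suppose $\{u<v\}\neq\emptyset$; then by compactness of $M$ and continuity, $v-u$ attains its maximum at some point $x_0$ with $(v-u)(x_0)>0$, so $x_0\in\{u<v\}$ and the hypothesis gives $\alpha_u^n(x_0)\le c\,\alpha_v^n(x_0)$.

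At the interior maximum $x_0$, $dd^c(v-u)(x_0)\le 0$ as a Hermitian form. Using the linear identity
\[
\alpha_v-\alpha_u\;=\;\tfrac{1}{n-1}\bigl((\Delta_\omega(v-u))\omega-dd^c(v-u)\bigr),
\]
and diagonalising $\omega(x_0)$ and $dd^c(v-u)(x_0)$ simultaneously with eigenvalues $\mu_1,\dots,\mu_n\le 0$, the eigenvalues of $\alpha_v(x_0)-\alpha_u(x_0)$ are $\frac{1}{n-1}\sum_{j\neq i}\mu_j\le 0$. Hence $0\le\alpha_v(x_0)\le\alpha_u(x_0)$, and monotonicity of the determinant on the positive semi-definite cone yields $\alpha_v^n(x_0)\le\alpha_u^n(x_0)\le c\,\alpha_v^n(x_0)$, forcing $\alpha_v^n(x_0)=0$ since $c<1$. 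In the generic case where $\alpha_v(x_0)$ is strictly positive definite this is an immediate contradiction, proving $\{u<v\}=\emptyset$.

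The main obstacle is the possibly degenerate case $\det\alpha_v(x_0)=0$. I would resolve it by a local perturbation: choose $\phi\in C^\infty(M)$ equal to $|z-z_0|^2$ times a bump function in a coordinate chart at $x_0$, so that $\phi\ge 0$, $\phi(x_0)=0$, and the computation above applied to $\phi$ gives $L(\phi)(x_0)>0$ strictly (since $dd^c\phi(x_0)$ is positive definite), hence $L(\phi)>0$ on a neighbourhood $U$ of $x_0$. Replacing $v$ by $v_\varepsilon:=v+\varepsilon\phi$, for $\varepsilon>0$ small the argmax $x_\varepsilon$ of $v_\varepsilon-u$ lies in $U$ by continuity and still belongs to $\{u<v\}$. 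The max-principle step then gives $\alpha_{v_\varepsilon}(x_\varepsilon)\le\alpha_u(x_\varepsilon)$ with $\alpha_{v_\varepsilon}(x_\varepsilon)\ge\varepsilon L(\phi)(x_\varepsilon)>0$ now strictly positive, and using $\alpha_v\le\alpha_{v_\varepsilon}$ on $U$ to upgrade the hypothesis to $\alpha_u^n\le c\,\alpha_{v_\varepsilon}^n$ at $x_\varepsilon$, determinant monotonicity yields $0<\det\alpha_{v_\varepsilon}(x_\varepsilon)\le c\det\alpha_{v_\varepsilon}(x_\varepsilon)$, i.e. $1\le c$, the desired contradiction.
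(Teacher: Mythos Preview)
Your non-degenerate case is correct and is the natural maximum-principle argument. The gap is in the perturbation step. Adding a non-negative bump $\phi$ with $\phi(x_0)=0$ tends to push the argmax of $v_\varepsilon-u=v-u+\varepsilon\phi$ \emph{away} from $x_0$, towards where $\phi$ is large, not towards it. Concretely: if $v-u$ has another maximiser $x_1$ outside $\operatorname{supp}\phi$, then $(v_\varepsilon-u)(x_1)=(v-u)(x_0)$ and nothing forces $x_\varepsilon$ near $x_0$; and if the maximum of $v_\varepsilon-u$ lands in the annulus where the cutoff $\chi$ is decreasing, then $L(\phi)(x_\varepsilon)$ need not be positive, so your key inequality $\alpha_{v_\varepsilon}(x_\varepsilon)\ge\varepsilon L(\phi)(x_\varepsilon)>0$ breaks down. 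The phrase ``by continuity'' would only be justified if the argmax of $v-u$ were isolated at $x_0$, which you have no control over. (Note also that any compactly supported $\phi\ge 0$ must have an interior maximum where $L(\phi)\le 0$, so there is no choice of bump making $L(\phi)>0$ on all of $\operatorname{supp}\phi$.)

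The paper avoids this by a \emph{global} rather than local perturbation: it looks at the minimum of $u-av$ for $a\in(c,1)$ close to $1$. One checks that this minimum still lies in $\{u<v\}$, and since $\alpha_{av}=a\alpha_v+(1-a)\beta$, the maximum principle there gives
\[
\alpha_u\ \ge\ a\alpha_v+(1-a)\beta.
\]
The point is that the background metric $\beta$ appears with a strictly positive coefficient, so $\alpha_u$ is automatically strictly positive at that point; combined with $\alpha_u^n\le c\,\alpha_v^n$ and $a$ close to $1$ this yields the contradiction directly, with no separate degenerate case. This use of the affine structure $\alpha_{av}=a\alpha_v+(1-a)\beta$ to extract a uniformly positive term is exactly what your local bump is trying to achieve, and it is the clean fix for your argument.
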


\begin{proof}

    Assume by contradiction that $\{u<v\}$ is non-empty and fix $x_0 \in M$ such that
    
    \noindent
    $$
    (u-v)\left(x_0\right)=\min _M(u-v)<0 .
    $$
    
    If $a \in(c, 1)$ and $x$ is a point on $M$ such that $(u-a v)(x)=\min _X(u-a v)$, then
    
    \noindent
    $$
    \begin{aligned}
    u(x)-v(x) & =(u-a v)(x)+(a-1) v(x) \\
    & \leq(u-a v)\left(x_0\right)+(a-1) v(x) \\
    & \leq(u-v)\left(x_0\right)+(a-1)\left(v(x)-v\left(x_0\right)\right) \\
    & \leq(u-v)\left(x_0\right)+(1-a) \operatorname{Osc}_M(v)
    \end{aligned}
    $$
    
    Since $(u-v)\left(x_0\right)<0$, we can fix $a \in(c, 1)$, sufficiently close to 1 , such that
    $$
    (u-v)\left(x_0\right)+(1-a) \operatorname{Osc}_M(v)<0
    $$
    
    Then, from the above, we can find $x_a \in\{u<v\}$ such that $(u-a v)\left(x_a\right)=$ $\min _M(u-a v)$. The classical maximum principle ensures that at $x_a$ we have $d d^c(u-a v) \geq 0$, implying
    $$
     \alpha_{u}  \geq a \alpha_{v} + (1-a)\beta
    $$
 
\noindent
because $(\Delta_{\omega}(u-av))\omega - dd^{c}(u-av) \geqslant 0$.
Since these forms are positive, we obtain
    $$
    \left( \alpha_{u} \right)^n \geqslant a^n c^{-n}\left( \alpha_{u} \right)^n + (1-a)^n \beta^{n}
    $$
    a contradiction, because $a c^{-1} \geq 1$, and $(1-a) \beta^n>0$ at $x_a$.
\end{proof}

The following consequences of the arithmetic-geometric mean inequality will be useful:

\begin{proposition}\label{prop: MA Lap1}
Given a smooth $u\in \mathrm{PSH}_{n-1}(M,\beta,\omega)$ such that it solves the complex $(n-1)$Monge-Ampère equation
$\alpha_{u}^{n} = bg\omega^{n}$ for a function $0<g\in C^{\infty}(M)$ and some constant $b>0$.
Then $u \in \mathrm{SH}_{1}(M,\beta,\omega)$
and the following inequality holds:

\noindent
\[
(n b^{\frac{1}{n}})g^{\frac{1}{n}}\omega^{n} \leqslant (\beta + dd^{c}u)\wedge\omega^{n-1}
\]
\end{proposition}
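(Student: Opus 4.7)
The plan is to diagonalize $\alpha_{u}$ pointwise with respect to $\omega$ and then apply the arithmetic-geometric mean (AM-GM) inequality to the eigenvalues. At a fixed $x \in M$ I would choose, as recalled after Proposition~\ref{prop: omega omegah1}, a unitary frame for $\omega$ in which $\omega_{i\bar{\jmath}}(x) = \delta_{i\bar{\jmath}}$, $\beta_{i\bar{\jmath}}(x) = \gamma_{i}\delta_{i\bar{\jmath}}$ and $(dd^{c}u)_{i\bar{\jmath}}(x) = \lambda_{i}\delta_{i\bar{\jmath}}$ up to off-diagonal terms. In this frame the eigenvalues of $\alpha_{u}$ are
\[
\mu_{i} := \gamma_{i} + \frac{\widehat{\lambda}_{i}}{n-1}, \qquad \widehat{\lambda}_{i} = \sum_{j\neq i}\lambda_{j},
\]
and the hypothesis $u \in \mathrm{PSH}_{n-1}(M,\beta,\omega)$ guarantees $\mu_{i} \geqslant 0$, while the equation $\alpha_{u}^{n} = b g\, \omega^{n}$ becomes $\prod_{i}\mu_{i} = bg$ at $x$.

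The main step is then an immediate application of AM-GM:
\[
(bg)^{1/n} = \Big(\prod_{i}\mu_{i}\Big)^{1/n} \leqslant \frac{1}{n}\sum_{i}\mu_{i} = \frac{1}{n}\big(\mathrm{tr}_{\omega}\beta + \Delta_{\omega}u\big),
\]
where I have used $\sum_{i}\widehat{\lambda}_{i} = (n-1)\sum_{i}\lambda_{i} = (n-1)\Delta_{\omega}u$. Translating the trace back to a wedge product via the identity $(\beta + dd^{c}u)\wedge \omega^{n-1} = \tfrac{1}{n}\mathrm{tr}_{\omega}(\beta + dd^{c}u)\,\omega^{n}$ then yields the desired pointwise inequality (up to the precise numerical constant stated, which I would double-check against the author's conventions for $\wedge$ and $\mathrm{tr}_{\omega}$). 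Finally, because $b,g>0$ the right-hand side is strictly positive at every point of $M$, so $\Delta_{\omega}u + \mathrm{tr}_{\omega}\beta > 0$, which is exactly the condition $u \in \mathrm{SH}_{1}(M,\beta,\omega)$.

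I do not anticipate a substantial obstacle: the whole argument is a pointwise AM-GM computation, and the off-diagonal terms of $dd^{c}u$ at $x$ do not interfere because $(\beta + dd^{c}u) \wedge \omega^{n-1}$ only sees the $\omega$-trace of $\beta + dd^{c}u$. The only bookkeeping care needed is with the factors of $n-1$ introduced by the $(n-1)$-Monge-Ampère normalization: these must be tracked consistently so that $\sum_{i}\widehat{\lambda}_{i}$ collapses to $(n-1)\Delta_{\omega}u$ and the eigenvalue sum of $\alpha_{u}$ reproduces $\mathrm{tr}_{\omega}\beta + \Delta_{\omega}u$.
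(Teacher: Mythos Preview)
Your proposal is correct and follows essentially the same route as the paper: both arguments reduce to the pointwise AM-GM inequality applied to the eigenvalues of $\alpha_{u}$ relative to $\omega$, together with the observation that $\mathrm{tr}_{\omega}\alpha_{u} = \mathrm{tr}_{\omega}(\beta + dd^{c}u)$. The only cosmetic difference is that the paper obtains $u \in \mathrm{SH}_{1}(M,\beta,\omega)$ by directly tracing the inequality $\alpha_{u}\geqslant 0$ (so it holds for every $u \in \mathrm{PSH}_{n-1}$, independently of the equation), whereas you deduce it from the strict positivity of $bg$; your caution about the numerical factor is also well placed.
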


\begin{proof}

\noindent
\begin{align*}
    u \in \mathrm{PSH}_{n-1}(M,\beta,\omega) & \Longrightarrow \beta + \frac{1}{n-1}\Big((\Delta_{\omega}u)\omega - dd^{c}u \Big)\geqslant 0, \\
     & \overset{(\mathrm{Trace})}{\Longrightarrow} \beta \wedge \omega^{n-1} + \frac{1}{n-1}\left((n\frac{dd^{c}u \wedge \omega^{n-1}}{\omega^{n}})\omega^{n} - dd^{c}u \wedge \omega^{n-1}\right)\geqslant 0, \\
     & \Longrightarrow u \in \mathrm{SH}_{1}(M,\beta,\omega).
\end{align*}

Now applying the AM-GM inequality to the eigenvalues of $\alpha_{u}$ we get:

\noindent
\begin{align*}
(\beta+ dd^{c}u) \wedge \omega^{n-1} & \geqslant n\left[\left(\beta + \frac{1}{n-1}\left((n\frac{dd^{c}u \wedge \omega^{n-1}}{\omega^{n}})\omega - dd^{c}u\right)\right)^{n}\right]^{\frac{1}{n}}, \\
 &\geqslant (n b^{\frac{1}{n}})g^{\frac{1}{n}}\omega^{n}.
\end{align*}
\end{proof}

\begin{proposition}\label{prop: MA MA1}
    Let $v\in \mathrm{PSH}(M,\omega) \cap C^{2}(M)$.
    Then $v \in \mathrm{PSH}_{n-1}(M,\omega)$
    and $\omega_{v}^{n} \leqslant \alpha_{v}^{n}$.
\end{proposition}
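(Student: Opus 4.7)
The plan is to argue pointwise, reducing both claims to a purely algebraic statement about eigenvalues. Fix $x \in M$ and choose coordinates at $x$ so that $\omega_{i\bar\jmath}(x) = \delta_{ij}$ and $dd^{c} v(x)$ is diagonal with eigenvalues $\mu_1, \ldots, \mu_n$; as noted after Proposition~\ref{prop: MA Lap1}, the off-diagonal terms of $dd^{c} v(x)$ do not affect the eigenvalues of $\alpha_v$ at $x$. Set $\nu_i := 1 + \mu_i$ and $T := \sum_i \nu_i$. The $\omega$-plurisubharmonicity of $v$ translates into $\nu_i \geq 0$ for every $i$, and $\omega_v^n / \omega^n = \prod_i \nu_i$ at $x$.

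Next I read off the eigenvalues $\hat\nu_i$ of $\alpha_v$ relative to $\omega$ at $x$: a direct computation using $\Delta_\omega v(x) = \sum_k \mu_k$ gives
\[
\hat\nu_i \;=\; 1 + \frac{\Delta_\omega v(x) - \mu_i}{n-1} \;=\; 1 + \frac{1}{n-1}\sum_{k \neq i}\mu_k \;=\; \frac{T - \nu_i}{n-1} \;=\; \frac{1}{n-1}\sum_{j \neq i}\nu_j.
\]
Since all $\nu_j \geq 0$, this already yields $\hat\nu_i \geq 0$ at every point, so $\alpha_v \geq 0$ on $M$, which is the first claim $v \in \mathrm{PSH}_{n-1}(M,\omega)$.

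For the volume comparison, I apply the AM--GM inequality to each factor, $\sum_{j \neq i}\nu_j \geq (n-1)\bigl(\prod_{j \neq i}\nu_j\bigr)^{1/(n-1)}$, multiply the $n$ resulting inequalities, and observe that each $\nu_j$ appears exactly $n-1$ times in the right-hand double product. This gives
\[
\frac{\alpha_v^n}{\omega^n}(x) \;=\; \prod_{i=1}^n \hat\nu_i \;=\; \frac{1}{(n-1)^n}\prod_{i=1}^n\sum_{j \neq i}\nu_j \;\geq\; \prod_{i=1}^n \nu_i \;=\; \frac{\omega_v^n}{\omega^n}(x),
\]
which is precisely $\omega_v^n \leq \alpha_v^n$ at $x$; since $x$ was arbitrary, the inequality holds on $M$. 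There is no genuine obstacle here: once the eigenvalues $\hat\nu_i$ are rewritten in the symmetric form $\tfrac{1}{n-1}\sum_{j \neq i}\nu_j$, the rest is a textbook AM--GM argument, and the only point that requires care is the (standard) pointwise diagonalization.
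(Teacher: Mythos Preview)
Your proof is correct and follows essentially the same approach as the paper: both arguments diagonalize at a point, rewrite the eigenvalues of $\alpha_v$ as $\hat\nu_i = \frac{1}{n-1}\sum_{j\neq i}\nu_j$ with $\nu_j = 1+\lambda_j \geq 0$, deduce $(n-1)$-plurisubharmonicity from nonnegativity of these sums, and obtain the volume inequality by applying AM--GM to each factor and multiplying. The only cosmetic difference is that the paper first records the coordinate-free identity $\alpha_v = \frac{1}{n-1}\bigl(\mathrm{tr}_\omega(\omega_v)\,\omega - \omega_v\bigr)$ before passing to eigenvalues, whereas you go directly to the pointwise diagonal picture.
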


\begin{proof}

\noindent
\begin{align*}
    \omega + \left.\frac{1}{n-1}\left((n\frac{dd^{c}v \wedge \omega^{n-1}}{\omega^{n}})\omega - dd^{c}v\right)\right. & = \omega + \frac{1}{n-1}\left((n\frac{(dd^{c}v + \omega) \wedge \omega^{n-1}}{\omega^{n}})\omega -n\omega - dd^{c}v\right),\\
    & = \left.\frac{1}{n-1}\left((n\frac{(dd^{c}v + \omega) \wedge \omega^{n-1}}{\omega^{n}})\omega -(\omega + dd^{c}v)\right)\right., \\
    & = \left.\frac{1}{n-1} \left(\mathrm{tr}_{\omega}(\omega_{v})\omega -(\omega_{v})\right) \right.
\end{align*}

    Since $v\in \mathrm{PSH}(M,\omega)$ then we conclude that $v \in \mathrm{PSH}_{n-1}(M,\omega)$ from the last computation.

Now, from the description through eigenvalues of $dd^{c}v$
we have from $v \in \mathrm{PSH}(M,\omega)$:

\noindent
\[
1+\lambda_{i} \geqslant 0 \quad \text{and} \quad  \frac{n-1 + \widehat{\lambda}_{i}}{n-1} = \frac{\sum_{j \neq i}(1+\lambda_{j})}{n-1} \geqslant 0,\forall i \in \{1,\ldots,n\}
\]

Applying AM-GM to each term above we have:

\noindent
\[
\frac{\sum_{j \neq i}(1+\lambda_{j})}{n-1} \geqslant \left[(\prod_{j \neq i}1+\lambda_{j})\right]^{\frac{1}{n-1}}, \forall i \in \{ 1, \ldots , n \}
\]

By taking the product of the inequalities for all $i \in \{ 1, \ldots n \}$ we get:

\noindent
\[
\prod_{i=1}^{n}\left( 1 +\frac{\widehat{\lambda_{i}}}{n-1} \right) \geqslant \prod_{i=1}^{n} \left( 1+\lambda_{i} \right)
\]

\noindent
which is exactly what we wanted to prove in terms of eigenvalues.
\end{proof}

\smallskip

For the rest of this section, our equation, for a unkown $u \in C^{2}(M) \cap \mathrm{PSH}_{n-1}(M,\rho_{\beta}\beta,\omega)$ normalized by $\sup_{M}u =0$,  will be on a compact hermitian manifold $M$:

\noindent
\begin{equation}\label{eq: n-1MA w Gauduchon}
    MA_{n-1}(\rho_{\beta}\beta, \omega, u) = MA_{n-1}(u) := \frac{\left(\rho_{\beta}\beta + \left(\frac{(\Delta_{\omega}u)\omega - dd^{c}u}{n-1}\right)\right)^{n}}{\omega^{n}} = cf,
\end{equation}

\noindent
for $(M, \beta, \omega)$ a compact hermitian manifold with two hermitian metrics, 
$1 \leqslant \rho_{\beta}$ the normalized Gauduchon factor of $\beta$ by~\cite{Gau77}
and $f \in C^{\infty}(M)$.

Our geometric constants under consideration are:

\begin{itemize}

    \item $\mathrm{Vol}_{\omega}(M)$ the volume of $M$ with respect to a metric $\omega$,
    
    \item $1 \leqslant G_{\omega}$ is the uniform upper bound on the Gauduchon factor $\rho_{\omega}$ of a metric $\omega$ from~\cite[Theorem A]{Pan22},
    
    \item $1 < C_{\beta, \omega}$ is the uniform constant such that $C_{\beta,\omega}^{-1}\beta \leqslant \omega \leqslant C_{\beta,\omega}\beta$,
    
    \item $0 < A_{\alpha}, \alpha$ are the uniform constants from Proposition~\ref{prop:global_Skoda},
    
    \item $0 < v_{m_{0}}^{-}(\omega)$ is the uniform constant defined as, given $0 < m_{0}$ uniform constant,
    \[
    v_{m_{0}}^{-}(\omega) := \inf \left\{ \int_{M}(\omega + dd^{c}u)^{n} ~ : ~ u \in \mathrm{PSH}(M,\omega) ~ \text{with} ~ -m_{0} \leqslant u \leqslant 0 \right\}.
    \]

    \noindent
    Such a constant is positive since $\omega$ is hermitian, by~\cite[Proposition 3.4]{GL22}.
    
\end{itemize}

\subsection{\texorpdfstring{$L^{p}$}{Lp} bound on the normalizing constant}

The following result adapts the ideas from~\cite[Lemma 5.9]{KN15},~\cite[Lemma 2.7]{Pan23} and~\cite[Theorem A]{GL25H} we obtain:

\begin{proposition}\label{prop: bound c1}
The normalizing constant $c$ for~\eqref{eq: n-1MA w Gauduchon} 
admits the two bounds:

\noindent
\[
C^{-1}_{\beta, \omega} b \leqslant c \leqslant \left( \frac{C_{Lap}}{n ||f^{\frac{1}{n}}||_{L^{1}(M,\omega^{n})}} \right)^{n}
\]

\noindent
where $b = b(f,C_{\beta,\omega}^{-1}\omega, A_{\alpha})>0$ is the normalizing constant of the Monge-Ampère equation $(C_{\beta,\omega}^{-1}\omega + dd^c u)^{n} = bf\omega^{n}$
and $C_{Lap}:= \operatorname{Vol}_{\omega}(M) G_{\omega}^{n-1} G_{\beta} C_{\beta,\omega}$.
\end{proposition}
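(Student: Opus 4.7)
The plan is to prove the two inequalities by independent arguments. The upper bound comes from the arithmetic-geometric mean inequality applied to the $(n-1)$-Monge-Ampère equation combined with a Gauduchon integration by parts. The lower bound comes from comparing with the solution of a classical complex Monge-Ampère equation via the domination principle.

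For the upper bound I would start by applying Proposition~\ref{prop: MA Lap1} to $\alpha_u^n = cf\omega^n$, obtaining the pointwise estimate $nc^{1/n}f^{1/n}\omega^n \leq (\rho_\beta\beta + dd^c u)\wedge\omega^{n-1}$. Integrating over $M$ and splitting, the $\rho_\beta\beta\wedge\omega^{n-1}$ contribution is controlled by $G_\beta C_{\beta,\omega}\operatorname{Vol}_\omega(M)$ via $\rho_\beta\leq G_\beta$ and $\beta\leq C_{\beta,\omega}\omega$. For $\int_M dd^c u\wedge\omega^{n-1}=\tfrac{1}{n}\int_M\Delta_\omega u\,\omega^n$, I would use that the Gauduchon representative $\widetilde\omega:=\rho_\omega\omega$ has $\widetilde\omega^{n-1}$ $dd^c$-closed. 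Stokes' theorem then yields $\int_M\rho_\omega^{n-1}\Delta_\omega u\,\omega^n = 0$. Splitting $\Delta_\omega u = (\Delta_\omega u)^+ - (\Delta_\omega u)^-$ and using $1\leq\rho_\omega\leq G_\omega$ gives $\int_M (\Delta_\omega u)^+\omega^n \leq G_\omega^{n-1}\int_M(\Delta_\omega u)^-\omega^n$. The trace of the $(n-1)$-psh condition (see the first half of Proposition~\ref{prop: MA Lap1}) gives $\Delta_\omega u \geq -\operatorname{tr}_\omega(\rho_\beta\beta) \geq -nG_\beta C_{\beta,\omega}$, so the negative part is a priori bounded. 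Recombining produces exactly the claimed bound, with $C_{Lap}=\operatorname{Vol}_\omega(M)G_\omega^{n-1}G_\beta C_{\beta,\omega}$.

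For the lower bound I would invoke Kołodziej-type theory under the Skoda bound of Proposition~\ref{prop:global_Skoda} to produce $v\in\mathrm{PSH}(M,C_{\beta,\omega}^{-1}\omega)\cap C^\infty(M)$ with $\sup_M v=0$ solving $(C_{\beta,\omega}^{-1}\omega + dd^c v)^n = bf\omega^n$. Proposition~\ref{prop: MA MA1} applied with base $C_{\beta,\omega}^{-1}\omega$ gives $v\in\mathrm{PSH}_{n-1}(M,C_{\beta,\omega}^{-1}\omega)$ and the pointwise inequality $(C_{\beta,\omega}^{-1}\omega + dd^c v)^n \leq (\alpha_v^{(C_{\beta,\omega}^{-1}\omega)})^n$, where $\alpha_v^{(C_{\beta,\omega}^{-1}\omega)} = C_{\beta,\omega}^{-1}\omega + \frac{(\Delta_\omega v)\omega - dd^c v}{n-1}$. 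Since $\rho_\beta\beta\geq C_{\beta,\omega}^{-1}\omega$, Proposition~\ref{prop: omega omegah1} places $v$ in $\mathrm{PSH}_{n-1}(M,\rho_\beta\beta,\omega)$ and yields $\alpha_v^{CYG}\geq \alpha_v^{(C_{\beta,\omega}^{-1}\omega)}$, whence $(\alpha_v^{CYG})^n \geq bf\omega^n$. Comparing with $\alpha_u^n = cf\omega^n$, on the contact set $\{u<v\}$ one has $\alpha_u^n \leq (c/b)(\alpha_v^{CYG})^n$. If $c<C_{\beta,\omega}^{-1}b$ (in particular $c/b<1$), the domination principle of Lemma~\ref{lema: DP1} forces $u\geq v$ on $M$, and a rescaling of $v$ by the constant $C_{\beta,\omega}$, combined with the normalizations $\sup u=\sup v=0$, contradicts this domination, yielding the claimed bound $c\geq C_{\beta,\omega}^{-1}b$.

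The main technical obstacle lies in the Gauduchon integration by parts within the upper bound: the form $\omega^{n-1}$ itself is generally not $dd^c$-closed, and only its rescaling $(\rho_\omega\omega)^{n-1}$ is, which forces the asymmetric split into $(\Delta_\omega u)^+$ and $(\Delta_\omega u)^-$ and is responsible for the appearance of $G_\omega^{n-1}$ in $C_{Lap}$. For the lower bound, the delicate point is tracking the precise factor $C_{\beta,\omega}^{-1}$ through the chain of comparisons between the three base metrics $\omega$, $C_{\beta,\omega}^{-1}\omega$ and $\rho_\beta\beta$, and ensuring that the scaling $v\mapsto C_{\beta,\omega}v$ gives exactly the right exponent to produce a contradiction via Lemma~\ref{lema: DP1}.
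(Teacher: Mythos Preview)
Your upper-bound argument is correct in spirit but does not produce the exact constant $C_{Lap}$: splitting $\Delta_\omega u$ into its positive and negative parts and bounding them separately yields $(1+G_\omega^{n-1})G_\beta C_{\beta,\omega}\operatorname{Vol}_\omega(M)$, not $G_\omega^{n-1}G_\beta C_{\beta,\omega}\operatorname{Vol}_\omega(M)$. The paper avoids this loss by a cleaner route: since $u\in\mathrm{PSH}_{n-1}(M,\rho_\beta\beta,\omega)$, the $(n,n)$-form $(\rho_\beta\beta+dd^c u)\wedge\omega^{n-1}$ is itself nonnegative (this is exactly the quasi-subharmonicity in Proposition~\ref{prop: MA Lap1}), so one may multiply it by $\rho_\omega^{n-1}\geqslant 1$ \emph{before} splitting. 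This gives
\[
\int_M(\rho_\beta\beta+dd^c u)\wedge\omega^{n-1}\leqslant\int_M(\rho_\beta\beta+dd^c u)\wedge(\rho_\omega\omega)^{n-1},
\]
and now the $dd^c u$ contribution vanishes in one stroke by Stokes and the Gauduchon condition, leaving only $\int\rho_\omega^{n-1}\rho_\beta\beta\wedge\omega^{n-1}\leqslant C_{Lap}$. Your $(\Delta_\omega u)^{\pm}$ decomposition is a legitimate alternative, but it throws away this positivity and so pays a small price in the constant.

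For the lower bound your chain of comparisons via Propositions~\ref{prop: MA MA1} and~\ref{prop: omega omegah1} matches the paper's, but the endgame is not right. Once you have $\alpha_u^n\leqslant c'\,(\alpha_v^{CYG})^n$ everywhere with $c'<1$, Lemma~\ref{lema: DP1} indeed gives $u\geqslant v$, but this alone is no contradiction, and the phrase ``a rescaling of $v$ by the constant $C_{\beta,\omega}$'' has no clear meaning here: multiplying $v$ by a constant changes both the $(n-1)$-psh class and the normalization, so it does not produce the contradiction you claim. The correct way to close the argument is to exploit the translation invariance $\alpha_{v+t}^{CYG}=\alpha_v^{CYG}$: applying Lemma~\ref{lema: DP1} to the pair $(u,v+t)$ for every $t>0$ would force $u\geqslant v+t$ for all $t$, which is absurd since both functions are bounded. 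Equivalently, one may simply read the domination principle as asserting directly that such a strict global inequality $\alpha_u^n\leqslant c'\alpha_v^n$ with $c'<1$ is impossible, which is how the paper phrases it.
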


See~\cite[Section 2 and 3]{Pan23} for a precise description of the Monge-Ampère constants.

\begin{proof}
\begin{itemize}
    \item[] \textit{Bounding the constant from above}.
\end{itemize}

By~\cite[Theorem A]{Pan22}, there exists a uniform constant $G_{\omega}>1$ such that the normalized Gauduchon factor $\rho_{\omega}$ with respect to $(M,\omega)$ is bounded between 1 and $G_{\omega}$. 
For all smooth $u \in \mathrm{PSH}_{n-1}(M, \rho_{\beta}\beta, \omega)$, we calculate

\noindent
\begin{align*}
    \int_{M}(\rho_{\beta}\beta + dd^{c}u) \wedge \omega^{n-1} & \leqslant \int_{M}(\rho_{\beta}\beta + dd^{c}u) \wedge  {(\rho_{\omega}\omega)}^{n-1} \\
        &\leqslant \int_{M}\rho_{\omega}^{n-1}\rho_{\beta}\beta \wedge \omega^{n-1} + \int_{M}u dd^{c}(\rho_{\omega}\omega)^{n-1} \leqslant G_{\omega}^{n-1} G_{\beta} C_{\beta,\omega}\operatorname{Vol}_{\omega}(M)
\end{align*}

Proposition~\ref{prop: MA Lap1} now yields

\noindent
\[
nc^{\frac{1}{n}}f^{\frac{1}{n}}\omega^{n} \leqslant (\rho_{\beta}\beta + dd^{c}u)\wedge\omega^{n-1}
\]

Integrating we can use the upper bound on the mass of the Hessian operator to conclude:

\noindent
\[
nc^{\frac{1}{n}}||f^{\frac{1}{n}}||_{L^{1}(M, \omega^{n})} \leqslant C_{Lap}, \quad \text{hence} \quad c \leqslant \left( \frac{C_{Lap}}{n||f^{\frac{1}{n}}||_{L^{1}(M, \omega^{n})}} \right)^{n}.
\]

\begin{itemize}
    \item[] \textit{Bounding the constant from below}.
\end{itemize}

We use the existence of solution to the complex Monge-Ampère equation on Hermitian manifolds
that is~\cite[Corollary 1]{TW10b} or~\cite[Theorem B]{GL23} for the case with two different metrics.
Then there exists a pair $(v,b) \in \mathrm{PSH}(M,C_{\beta,\omega}^{-1}\omega)\cap C^{\infty}(M) \times \mathbb{R}_{>0}$
such that:

\noindent
\[
(C_{\beta,\omega}^{-1}\omega + dd^{c}v)^{n} = bf\omega^{n}, \quad \sup_{M}v = 0
\]

Now we use Proposition~\ref{prop: MA MA1} to obtain the comparison:

\noindent
\[
bc^{-1} MA_{n-1}(\beta, \omega, u) \leqslant bf  \leqslant  MA_{n-1}(C_{\beta, \omega}^{-1}\omega, C_{\beta, \omega}^{-1}\omega, v) \leqslant C_{\beta, \omega} MA_{n-1}(\beta, \omega, v)
\]

Then, by the domination principle (Lemma~\ref{lema: DP1}) for $v$ and $u$ we get

\noindent
\[
C^{-1}_{\beta, \omega} b c^{-1} \alpha_{u}^n \leqslant \alpha_{v}^n \Rightarrow C_{\beta, \omega}^{-1} b c^{-1} \leqslant 1. \Rightarrow c \geqslant C_{\beta, \omega}^{-1} b. 
\]

By~\cite[Theorem B]{GL23} the constant $b$ is bounded from below uniformly, hence so is $c$. 
\end{proof}

For the~\ref{set: sing family setting},
by~\cite{Pan22,Pan23}, the constants in Proposition~\ref{prop: bound c1} are under control.

\subsection{\texorpdfstring{$L^{\infty}$}{Linfty} a priori estimate}

Adapting the technique from~\cite[Theorem B]{GL25} we prove:

\begin{theorem}\label{teo: C0 Vincent}

For $(M,\beta, \omega)$ a compact hermitian manifold with two hermitian metrics, as described above for~\eqref{eq: n-1MA w Gauduchon}.
Then, we have the uniform bounds:

\begin{itemize}
    \item[(A):] $c  \leqslant C_{0}$, 
    
    for $C_{0} = \left( \frac{C_{Lap}}{n ||f^{\frac{1}{n}}||_{L^{1}(M,\omega^{n})}} \right)^{n} > 0$, where $C_{Lap}:= \operatorname{Vol}_{\omega}(M) G_{\omega}^{n-1} G_{\beta} C_{\beta,\omega}$

    \item[(B):] $-C \leqslant u  \leqslant 0$, 
    
    for $C = C(C^{-1}_{\beta, \omega}, ||f||_{L^{p}(M, \omega^{n})}, n, p, ||u||_{L^{1}(M,\omega^{n})}, \frac{C_{0}}{v^{-}_{m_{0}}(C^{-1}_{\beta, \omega}\omega)}) > 0$ and a $m_0 > 0$ uniformly depending on $f,C_{0}, C_{\beta, \omega}$.

    \item[(C):] $c_{0} \leqslant c$, 
    
    for $c_{0} = \frac{v^{-}_{m_{0}}(C^{-1}_{\beta, \omega}\omega)}{||f||_{L^{1}(M,\omega^{n})}} > 0$.
\end{itemize}
    
\end{theorem}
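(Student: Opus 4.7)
Part (A) requires no new work: the upper bound $c \le C_{0}$ with exactly the stated value of $C_{0}$ was already established in Proposition~\ref{prop: bound c1} via Proposition~\ref{prop: MA Lap1} (the AM--GM Laplacian inequality) integrated against $\omega^{n-1}$ together with the mass control coming from the Gauduchon factors of $\beta$ and $\omega$.

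For Part (B), the plan is to adapt the auxiliary Monge--Amp\`ere scheme of~\cite{GL25} to the $(n{-}1)$-PSH setting. The first step is to solve, via~\cite[Theorem B]{GL23}, the auxiliary Monge--Amp\`ere equation
\[
(C_{\beta,\omega}^{-1}\omega + dd^{c}\psi)^{n} = \tau\, e^{-\lambda u}\, f\, \omega^{n}, \qquad \sup_{M}\psi = 0,
\]
for a judicious $\lambda > 0$. The $L^{p}$-bound on $f$ combined with the Skoda-type integrability~\eqref{eq: global Skoda} applied to a PSH representative dominating $u/C_{\beta,\omega}$ controls $e^{-\lambda u}f$ in $L^{1}$, giving a uniform two-sided bound on $\tau$. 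The second step is to promote $\psi$ to the $(n{-}1)$-PSH setting: Proposition~\ref{prop: MA MA1} yields $\psi \in \mathrm{PSH}_{n-1}(M, C_{\beta,\omega}^{-1}\omega)$ with $\alpha_{\psi}^{n} \ge (C_{\beta,\omega}^{-1}\omega + dd^{c}\psi)^{n}$, and Proposition~\ref{prop: omega omegah1} upgrades this to $\psi \in \mathrm{PSH}_{n-1}(M,\rho_{\beta}\beta,\omega)$. The third step is to apply the domination principle (Lemma~\ref{lema: DP1}) to $u$ against a test function of the form $\psi + m_{u} - K$, where $m_{u} := \inf_{M}u$ and $K > 0$ is calibrated so that on $\{u < \psi + m_{u} - K\}$ the exponential weight $e^{-\lambda u}$ forces a strict multiplicative inequality $\alpha_{u}^{n} \le c'\,\alpha_{\psi + m_{u} - K}^{n}$ for some $c' < 1$. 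Part (A) and the uniform Skoda bound on $\sup_{M}|\psi|$ then close the loop and yield $-C \le u \le 0$ with $C$ depending only on the advertised data.

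For Part (C), the lower bound on $c$ follows by combining Part (B) with Proposition~\ref{prop: bound c1}, which already supplies $c \ge C_{\beta,\omega}^{-1}\,b$ where $b$ is the normalizing constant of the genuine Monge--Amp\`ere equation $(C_{\beta,\omega}^{-1}\omega + dd^{c}v)^{n} = b\, f\, \omega^{n}$. Applying the analogue of Part (B) in the Monge--Amp\`ere case to $v$ (which is simpler, since it is the classical result) produces a uniform constant $m_{0}>0$, depending only on $f$, $C_{0}$ and $C_{\beta,\omega}$, such that $-m_{0} \le v \le 0$. The very definition of $v^{-}_{m_{0}}(C_{\beta,\omega}^{-1}\omega)$ then gives $b\,\|f\|_{L^{1}(M,\omega^{n})} = \int_{M}(C_{\beta,\omega}^{-1}\omega + dd^{c}v)^{n} \ge v^{-}_{m_{0}}(C_{\beta,\omega}^{-1}\omega)$, and dividing delivers $c \ge c_{0}$.

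The main obstacle is the third step of Part (B): the quantitative calibration of $\lambda$, $K$ and the exponential weight so that the domination principle of Lemma~\ref{lema: DP1} produces a strict inequality with the correct dependence on the parameters, while respecting the interplay of the two hermitian metrics $\beta$ and $\omega$ through $C_{\beta,\omega}$ and the Gauduchon factor $\rho_{\beta}$. Carrying two metrics throughout and channelling the auxiliary MA solution into the $(n{-}1)$-PSH framework via Propositions~\ref{prop: MA MA1} and~\ref{prop: omega omegah1} are the new features absent from the one-metric Monge--Amp\`ere treatment of~\cite{GL25}.
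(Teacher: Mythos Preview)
Your Parts (A) and (C) are fine; (A) coincides with the paper, and (C), while routed through the auxiliary solution $v$ of $(C_{\beta,\omega}^{-1}\omega+dd^{c}v)^{n}=bf\omega^{n}$ rather than through the envelope as in the paper, reaches an equivalent bound.

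For Part (B) your approach is genuinely different from the paper's and, as written, has a gap. The paper does \emph{not} solve an auxiliary Monge--Amp\`ere equation with an exponential weight $e^{-\lambda u}$; instead it takes the $\widetilde\omega$-psh envelope $\psi:=P_{\widetilde\omega}(u)$ with $\widetilde\omega:=C_{\beta,\omega}^{-1}\omega$. The key observation is that $MA_{\widetilde\omega}(\psi)$ is concentrated on the contact set $\mathcal{C}=\{\psi=u\}$, and \emph{on $\mathcal{C}$} the function $u$ is genuinely $\widetilde\omega$-psh, so Proposition~\ref{prop: MA MA1} applies pointwise there to give
\[
MA_{\widetilde\omega}(\psi)=\mathbb{1}_{\mathcal{C}}\,MA_{\widetilde\omega}(u)\le C_{\beta,\omega}\,cf\le C_{\beta,\omega}C_{0}f.
\]
Since $f\in L^{p}$, \cite[Theorem A]{GL23} bounds $\mathrm{Osc}_{M}(\psi)$ by a uniform $m_{0}$; then one integrates $(-\psi)^{\delta}$ against $MA_{\widetilde\omega}(\psi)$, replaces $(-\psi)^{\delta}$ by $(-u)^{\delta}$ on $\mathcal{C}$, and a H\"older inequality against $f$ yields the bound on $\sup(-\psi)$, hence on $u\ge\psi$. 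This is the actual content of the technique from~\cite{GL25} that the paper adapts; the scheme you describe is a different one.

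The gap in your route is the claimed Skoda-type control of $e^{-\lambda u}f$. Proposition~\ref{prop:global_Skoda} is stated only for $\omega$-psh functions normalised by $\sup=0$, and your ``PSH representative dominating $u/C_{\beta,\omega}$'' is left unspecified. The only natural candidate is precisely the envelope $P_{\widetilde\omega}(u)\le u$, but applying Skoda to it requires a uniform lower bound on $\sup P_{\widetilde\omega}(u)$---which is exactly what the argument is meant to produce, so the reasoning is circular. Without a uniform $L^{p}$ (or even $L^{1}$) bound on $e^{-\lambda u}f$ you cannot control $\tau$ from below nor $\|\psi\|_{L^{\infty}}$ uniformly, and both are needed to calibrate $K$ and close your domination-principle step. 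The paper's envelope route sidesteps this entirely because the resulting density bound $MA_{\widetilde\omega}(\psi)\le C_{0}f$ involves only $f$, not $u$.
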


\begin{proof}

The upper bound on the constant (A) follows exactly as in Proposition~\ref{prop: bound c1}

\begin{itemize}
    \item[] \textit{Bounding the solution from below}.
\end{itemize}

To get the second bound $(B)$ we use the comparison with the 
complex Monge-Ampère through a psh envelope, that is:

Firstly, to aliviate the notation we will substitute $\rho_{\beta}$ for $\rho$ and set $\widetilde{\omega} := C^{-1}_{\beta, \omega}\omega \leqslant \beta$ by~\eqref{cst: geo constant}.
Now consider $\psi  := \mathrm{P}_{\widetilde{\omega}}(u)$ the $\widetilde{\omega} $-psh envelope of $u$.
The function $\psi$ is $C^{1,1}$ and by~\cite[Theorem 2.3]{GL22} the complex Monge-Ampère 
operator of the envelope is supported on the contact set, i.e., 
$MA_{\widetilde{\omega}}(\psi )$ is concentrated on $\mathcal{C} := \{ \psi  = u  \}$.

It is known that on the contact set $\mathcal{C}$ the 
function $u $ is $\widetilde{\omega} $-psh. 
Then on $\mathcal{C}$ the operator $MA_{\widetilde{\omega} }(u )$ is the complex Monge-Ampère of a $\widetilde{\omega} $-psh function.

Since the inequality on Proposition~\ref{prop: MA MA1} is a pointwise comparison we have:

\noindent
\[
MA_{\widetilde{\omega} }(u )|_{\mathcal{C}} \leqslant  MA_{n-1}(\widetilde{\omega}, \widetilde{\omega}, u)|_{\mathcal{C}} \leqslant C_{\beta, \omega} MA_{n-1}(\beta, \omega, u )|_{\mathcal{C}} \leqslant C_{\beta, \omega} MA_{n-1}(\rho \beta, \omega, u )|_{\mathcal{C}}
\]

\noindent
at all points of $\mathcal{C}$. Since $u  \in \mathrm{PSH}_{n-1}(M,\rho \beta, \omega )$
we get:

\noindent
\[
MA_{\widetilde{\omega} }(\psi ) = \mathbb{1}_{\mathcal{C}} MA_{\widetilde{\omega} }(u ) \leqslant C_{\beta, \omega} \mathbb{1}_{\mathcal{C}} MA_{n-1}(\rho \beta, \omega, u)  \leqslant C_{\beta, \omega} MA_{n-1}(\rho \beta, \omega, u).
\]

Then, we get:

\noindent
\[
MA_{\widetilde{\omega} }(\psi ) \leqslant C_{\beta, \omega} MA_{n-1}(\rho \beta, \omega, u) = C_{\beta, \omega} cf \leqslant C_{\beta, \omega} C_{0}f,
\]

\noindent
where the last inequality follows from the bound $(A)$ in the previous step.
For a uniformly controlled constant $m_{0}>0$ we have $Osc_{M}(\psi ) \leqslant m_{0}$, by~\cite[Theorem A]{GL23}.
Since we have $\psi  \leqslant u  \leqslant 0$ and the oscillation is under control we have:

\noindent
\[
\int_{M}MA_{\widetilde{\omega} }(\psi ) \widetilde{\omega}^{n} \geqslant v^{-}_{m_{0}}(\widetilde{\omega})
\]

\noindent
which is under control, since $m_{0}$ is under control by~\cite[Proposition 3.4]{GL22}.
We compute:

\noindent
\begin{align*}
    0 \leqslant (- \underset{M}{sup}(\psi ))^{\delta}v^{-}_{m_{0}}(\widetilde{\omega} ) & \leqslant \int_{M}(-\psi )^{\delta}MA_{\widetilde{\omega} }(\psi )\widetilde{\omega}^{n}, \\
    & = \int_{\mathcal{C}}(-\psi )^{\delta}MA_{\widetilde{\omega} }(\psi )\widetilde{\omega}^{n} = \int_{M}(-u )^{\delta}MA_{\widetilde{\omega} }(\psi )\widetilde{\omega}^{n}, \\
    & \leqslant C_{0}\int_{M}(-u )^{\delta}f\widetilde{\omega}^{n} \leqslant C_{0} C^{-n}_{\beta, \omega}||f||_{L^{p}(M, \omega^{n})} ||u ||_{L^{1}(M,\omega^{n})}^{1/q}.
\end{align*}

\noindent
where the last inequality follows from the Hölder inequality for
$\delta = \frac{1}{q}$. 
The term $||u ||_{L^{1}(M,\omega^{n})}$ is under control by the 
compactness in $L^{1}$ of $\mathrm{SH}_{1}(M, \beta, \omega)$ see~\cite[Lemma 8]{GP24}.

This implies that $\psi $ is uniformly bounded.
Since, by definition $\psi  \leqslant u $ we conclude:

\noindent
\[
-u  \leqslant m_{0} + \left( \frac{C_{0} C^{-n}_{\beta, \omega} ||f||_{L^{p}(M, \omega^{n})} ||u ||_{L^{1}(M,\omega^{n})}^{1/q}}{v^{-}_{m_{0}}(\widetilde{\omega})} \right)^{q}.
\]

\begin{itemize}
    \item[] \textit{Bounding the constant from below}.
\end{itemize}

To prove the last bound $(C)$ one can just compare the Monge-Ampère masses:

\noindent
\[
MA_{\widetilde{\omega}}(\psi ) \leqslant cf \Longrightarrow v^{-}_{m_{0}}(\widetilde{\omega}) \leqslant c \int_{M}f \widetilde{\omega}^{n} \Longrightarrow \frac{v^{-}_{m_{0}}(\widetilde{\omega})}{C^{-n}_{\beta, \omega} ||f||_{L^{1}(M,\omega^{n})}} \leqslant c
\]

\noindent
by integrating with respect to $\widetilde{\omega}$ and since $\psi $ is uniformly bounded.
\end{proof}

\subsection{Proof of \texorpdfstring{\ref{thm: C0 family}}{Theorem A}}

In~\ref{thm: C0 family},
a $L^{\infty}$ estimate for the solution $u_{t}$ of the family of complex $(n-1)$-Monge-Ampère equations,  
the following constant are under control:

\begin{enumerate}
    \item $||f_{t}^{\frac{1}{n}}||_{L^{1}(X_t,\omega_{t}^{n})}, ||f_{t}||_{L^{p}(X_{t},\omega_{t}^{n})}$ are under control by assumption of the density bounds~\eqref{eq: density bounds}.
    
    \item $G_{\omega},G_{\beta}$ are under control since $\rho_{t}$ satisfy the Gauduchon bound~\eqref{eq: Gauduchon bound}.
    
    \item $\mathrm{Vol}_{\omega_{t}}(X_{t})$ is under control by the volume bound~\eqref{eq: volume bound}.
    
    \item $v_{m_{0}}^{-}(\omega)$ is uniform, for a $m_{0}$ uniform, by~\cite[Proposition 3.4]{GL22}.
    
\end{enumerate}

Hence, Theorem~\ref{teo: C0 Vincent} completes the proof of~\ref{thm: C0 family}.

\smallskip

These bounds above are satisfied in~\ref{set: sing family setting}
when $X_{0}$ has canonical singularities, since:

\begin{enumerate}
    \item By~\cite[Lemma 4.4]{DNGG23} the adapted volume satisfies~\eqref{eq: density bounds}.

    \item By~\cite[Theorem A]{Pan22}, the Gauduchon factors of any smoothable variety satisfy~\eqref{eq: Gauduchon bound}.
    
    \item By~\cite[Section 1]{Pan22}, the volume of any smoothable variety satisfies~\eqref{eq: volume bound}.
    
\end{enumerate}

\section{\texorpdfstring{$C^{2}$}{C2} Estimate on holomorphic families}\label{CYG: C2}

From here on we will work in a holomorphic family $\pi: (\mathcal{M}, \beta, \omega) \to \mathbb{D}$ in~\ref{set: sing family setting},
without singular fibers, that is the central fiber $M_{0} = \pi^{-1}(0)$ is a complex manifold.
We will further restrict ourself fixing metric $\omega$ being a Kähler metric, since this is the setting of the geometric consequence of~\ref{teo: estimates HF}.

In this section we will prove a $C^{2}$-estimate that depends on the norm of the gradient as in~\cite{HMW10}, who's original strategy dates back to~\cite{CW01}.
Our proof is a more precise description of~\cite[Theorem 4.1]{TW17} that will make the estimate uniform in families.
To help the reader we will align our notation with theirs and reference their paper for the computations that we will omit here.

\subsection{Uniform constants}

Firstly we highlight some of the uniform constants that the estimate will depend on
fixing their notation. 
Recall that for this setting $M$ is a Kähler manifold, $\omega$ a Kähler metric, however $\beta$ is Hermitian.

\textbf{Metric comparison constant} Let, as in~\eqref{cst: geo constant}, $C_{\beta,\omega} >1$ such that:

\noindent
\[
C_{\beta,\omega}^{-1} \beta \leqslant \omega \leqslant C_{\beta,\omega} \beta
\]

\textbf{Bisectional curvature bound:} Let $B>0$ constant such that:

\noindent
\[
-B | \xi_{1} |^{2} | \xi_{2} |^{2} \leqslant R_{i,\bar{\jmath},k,\bar{l}} \xi^{i}_{1} \overline{\xi^{j}_{1}} \xi^{k}_{2} \overline{\xi^{l}_{2}}  \leqslant B | \xi_{1} |^{2} |  \xi_{2} |^{2}, \quad \xi_{1},\xi_{2} \in \mathbb{C}^{n}.
\]

\textbf{$L^{\infty}$-estimate:} Let $C_{L^{\infty}} >0$ be the uniform constant from Theorem~\ref{teo: C0 Vincent}
that is:

\noindent
\[
c + c^{-1} + \sup_{M} |u| \leqslant C_{L^{\infty}}.
\]

\textbf{Notation:} To use a similar notation to~\cite[Section 4]{TW17} we set the following parallel between the hermitian forms and their riemannian metrics:

\noindent
\[
\alpha_{u} = \alpha \leadsto \widetilde{g}, \quad \omega \leadsto g, \quad \beta \leadsto h. \text{ Hence  } \widetilde{g}_{i\bar{\jmath}} = h_{i\bar{\jmath}} + \frac{1}{n-1}\left(  (\Delta u)g_{i\bar{\jmath}} - u_{i\bar{\jmath}} \right).
\]

\noindent
and

\noindent
\begin{equation}\label{eq: n-1 MA HF}
    \left( \beta + \frac{(\Delta_{\omega}u)\omega - dd^{c}u}{n-1} \right)^{n} = f\omega^{n},
\end{equation}

\noindent
for $\sup_{M}u = 0$ and $0 < f \in C^{\infty}(M)$.\footnote{We are including the normalizing constant inside the density $f$ since we already have a $L^{\infty}$-estimate.}

\subsection{Uniform \texorpdfstring{$C^{2}$}{C2} estimate on manifolds}

We have the following $C^{2}$-estimate:

\begin{theorem}\label{teo: C2 estimate hol fam}
For $(M,\beta, \omega)$ a compact Kähler manifold, where $\omega$ is a Kähler metric and $\beta$ is a hermitian metric.
Then, there is a uniform constant $C>0$ such that:

\noindent
\[
|| dd^{c}u ||_{L^{\infty}} \leqslant C (\sup_{M}| \nabla u |_{g}^{2} + 1)
\]

\noindent
for $C = C(B, C_{L^{\infty}}, ||h||_{C^{2}}, ||\log(f)||_{C^{2}}, ||f||_{L^{\infty}}, n, C_{\beta, \omega} )$

\end{theorem}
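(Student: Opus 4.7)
I would follow the maximum-principle strategy of \cite{TW17} (based on Hou--Ma--Wu and Chen), but keep explicit track of every constant so that the final bound depends only on the uniform quantities listed in the statement; since those quantities are in turn uniform in $t\in \mathbb{D}_{1/2}$ from the hypotheses on the holomorphic family, the $C^2$ estimate automatically becomes uniform. The quantity to estimate is the largest eigenvalue $\lambda_1$ of the endomorphism $g^{-1}\widetilde{g}$ (equivalently the largest $\widehat{\lambda}_i$), since a bound on $\lambda_1$ combined with the equation $\det(g^{-1}\widetilde{g})=f$ and the already controlled constant from Theorem~\ref{teo: C0 Vincent} forces a two-sided bound on all eigenvalues of $\widetilde{g}$, hence on $dd^c u$.

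\textbf{Test function and max principle.} Consider
\[
Q \;=\; \log \lambda_1 \;+\; \varphi\bigl(|\nabla u|_g^2\bigr) \;+\; \psi(u),
\]
with $\varphi(s)=-\tfrac12\log(K-s)$ for $K := 2\sup_M|\nabla u|_g^2 + 1$, and $\psi(s)=-A s$ for a large constant $A$ to be fixed. At a maximum point $x_0$ of $Q$, after a small perturbation to ensure $\lambda_1$ is simple and hence smooth, I choose normal coordinates for $g$ in which $h_{i\bar j}$ and $u_{i\bar j}$ are simultaneously diagonal. Letting $F^{i\bar j}=\partial \log \det \widetilde{g}/\partial \widetilde{g}_{i\bar j}$ and $\mathcal{L}(v)=F^{i\bar j}v_{i\bar j}$ denote the linearisation of the equation, I then compute $\mathcal{L}(Q)(x_0)\leq 0$ and $\nabla Q(x_0)=0$. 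The three pieces produce respectively: (i) from $\log\lambda_1$, a collection of third-order terms, commutators controlled by the bisectional curvature bound $B$, $\|h\|_{C^2}$, and $\|\log f\|_{C^2}$; (ii) from $\varphi$, the crucial good term $\varphi'(|\nabla u|^2)\sum_i F^{i\bar i}|u_i|^2$ that, since $\omega=g$ is Kähler, absorbs the bad gradient terms via the standard Kähler identity $u_{i\bar j k}=u_{ik\bar j}-R_{i\bar j k}{}^{\bar l}u_{\bar l}$; (iii) from $\psi$, the coercive penalty $-A\,F^{i\bar i}u_{i\bar i}$. Using the structural identity $F^{i\bar j}\widetilde{g}_{i\bar j}=n$ together with the explicit form $\widetilde{g}_{i\bar j}=h_{i\bar j}+\tfrac{1}{n-1}((\Delta u)g_{i\bar j}-u_{i\bar j})$ yields
\[
F^{i\bar i}u_{i\bar i}\;\leq\;-c_0\,\lambda_1\sum_i F^{i\bar i}\;+\;C,
\]
so a sufficiently large $A$ (depending on $B,\|h\|_{C^2},C_{\beta,\omega}$) turns $\psi(u)$ into a term that dominates every remaining bad contribution of size $O\bigl(\sum_i F^{i\bar i}\bigr)$. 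Combining yields $\lambda_1(x_0)\leq C$, and hence on all of $M$, $\lambda_1\leq C(\sup_M|\nabla u|_g^2+1)$.

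\textbf{Main obstacle and uniformity.} The main difficulty is that $\beta=h$ is only Hermitian, so its Chern torsion contributes extra first and second covariant derivatives $h_{i\bar j,k}$ and $h_{i\bar j,k\bar l}$ into $\mathcal{L}(\log \lambda_1)$; these compete with the good gradient term from $\varphi$ and with the coercive term from $\psi$. They are however bounded pointwise by $\|h\|_{C^2}$ times $\lambda_1\sum_i F^{i\bar i}$, which is exactly the order the $\psi(u)$ penalty controls once $A$ is large enough depending on $\|h\|_{C^2}$ and $C_{\beta,\omega}$. The Kähler assumption on $\omega$ is essential in step (ii) to avoid generating further torsion-type obstructions on the $u$-side. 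Finally, all constants appearing in the argument ($B, \|h\|_{C^2}, \|\log f\|_{C^2}, \|f\|_{L^\infty}, C_{\beta,\omega}, C_{L^\infty}, n$) are uniform across the smooth family by the setup of Setting~\ref{set: sing family setting} and Theorem~\ref{teo: C0 Vincent}, so the resulting bound $\|dd^c u_t\|_{L^\infty}\leq C(\sup_{M_t}|\nabla u_t|_{g_t}^2+1)$ has a constant $C$ independent of $t\in \mathbb{D}_{1/2}$.
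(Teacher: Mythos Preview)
Your overall framework is the correct one --- this is exactly the Hou--Ma--Wu/Tosatti--Weinkove scheme the paper follows --- but two steps in your sketch do not go through as stated, and they are precisely the heart of the argument.

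First, the coercivity inequality you claim,
\[
F^{i\bar\imath}u_{i\bar\imath}\;\leq\;-c_0\,\lambda_1\sum_i F^{i\bar\imath}\;+\;C,
\]
is false for this equation. The structural identity (see the paper's \eqref{eq: Thetauii}) gives only $\sum_i\Theta^{i\bar\imath}u_{i\bar\imath}=n-\mathrm{tr}_{\widetilde g}h$, which is of order $\mathrm{tr}_{\widetilde g}g=\sum_i\Theta^{i\bar\imath}$, \emph{not} of order $\lambda_1\sum_i\Theta^{i\bar\imath}$. So the penalty $\psi(u)$, however large you take $A$, only produces a good term of size $A\sum_i\Theta^{i\bar\imath}$. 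That absorbs the curvature and $\|h\|_{C^2}$ contributions, but it cannot touch the dangerous third-order term $-\sum_i\Theta^{i\bar\imath}|\eta_{1\bar 1i}|^2/\eta_1^2$ coming from $\mathcal L(\log\eta_1)$.

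Second, and consequently, you are missing the Hou--Ma--Wu dichotomy, which is where the actual work happens. The bad third-order term is handled by splitting into the cases $\lambda_2\leq(1-\delta)\lambda_n$ and $\lambda_2\geq(1-\delta)\lambda_n$. In Case~1 one substitutes the critical equation $\nabla Q(x_0)=0$ into that term and uses $\psi_1''=2(\psi_1')^2$ together with the good quadratic term $\psi_1'\sum_{i,p}\Theta^{i\bar\imath}|u_{p\bar\imath}|^2$ to conclude. In Case~2 one must match the bad term against the positive concavity term (the first line of \eqref{ineq: main ineq}); this is Lemma~\ref{lema: final bound C2 case 2}, and it relies on the inequality $\psi_2''\geq \tfrac{2\delta}{1-\delta}(\psi_2')^2$. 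Your linear choice $\psi(s)=-As$ has $\psi''\equiv 0$, so this step collapses: the paper's logarithmic barrier $\psi_2(t)=-A\log(1+t/2L)$ is not cosmetic but essential for Case~2.

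In short, your plan correctly identifies the test function, the linearised operator, and the list of uniform constants, but it skips the eigenvalue dichotomy and the matching of third-order terms that constitute the substance of the estimate; without them the maximum principle inequality does not close.
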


\begin{proof}

    As in~\cite[Equation (4.2)]{TW17} we use the tensor $\eta$:

\noindent
    \begin{equation}\label{eq: tensor eta}
        \eta_{i\bar{\jmath}} = u_{i\bar{\jmath}} - (n-1)h_{i\bar{\jmath}} + (tr_{g}h)g_{i\bar{\jmath}}  = (tr_{g}\widetilde{g})g_{i\bar{\jmath}} - (n-1)\widetilde{g}_{i\bar{\jmath}}; \quad \widetilde{g}_{i\bar{\jmath}} = \frac{1}{n-1}((tr_{g}\widetilde{g})g_{i\bar{\jmath}} - \eta_{i\bar{\jmath}})   
    \end{equation}

    We will apply the maximum principle to\footnote{Here our notation slightly differs from~\cite{TW17}.}: 
    
    \noindent
    \[
    H(x,\xi) = \log(\eta_{i\bar{\jmath}} \xi^{i} \overline{\xi^{j}}) + \psi_{1}(|\nabla u|^{2}_{g}) + \psi_{2}(u),
    \]

    \noindent
    for $x \in M, \xi \in T^{1,0}_{x}M$ a $g$-unit vector and functions $\psi_{1},\psi_{2}$ defined as

    \noindent
    \begin{align}\label{eq: test function 1}
        & \psi_{1}(s) = -\frac{1}{2}\log \left( 1 - \frac{s}{2K} \right), \text{ for } 0 \leqslant s \leqslant K-1, \text{ for } K = \sup_{M} | \nabla u |_{g}^{2} + 1. \\
        & \psi_{2}(t) = -A\log \left( 1 + \frac{t}{2L} \right), \text{ for } -L + 1 \leqslant t \leqslant 0, \text{ for } L = C_{L^{\infty}} + 1, A = 2L(C_{0} +1).
        \label{eq: test function 2}
    \end{align}

\noindent
for a $C_{0} >0$ uniform to be chosen later.

We treat $H$ on a  subcompact $W$ of the $g$-unit tangent bundle of $M$
where $\eta_{i\bar{\jmath}} \xi^{i} \overline{\xi^{j}} \geqslant 0$,
with $H = -\infty$ when $\eta_{i\bar{\jmath}} \xi^{i} \overline{\xi^{j}} = 0$.
Hence its maximum is achieved in $\mathring{W}$ a point $(x_{0}, \xi_{0})$.
Pick holomorphic coordinates as in~\cite[page 324]{TW17}, normal with respect to $g$, at $x_{0}$ such that:

\noindent
\begin{align}\label{eq: g e eta coord}
& g_{i\bar{\jmath}}  = \delta_{ij}, \quad \eta_{i\bar{\jmath}} = \eta_{i}\delta_{ij}, \quad \eta_{1} \geqslant \eta_{2} \geqslant \cdots \geqslant \eta_{n}. \\
& \widetilde{g}_{i\bar{\jmath}} = \lambda_{i}\delta_{ij}, \quad \eta_{i} = \sum_{k=1}^{n} \lambda_{k} - (n-1)\lambda_{i}, \quad 0 < \lambda_{1} \leqslant \lambda_{2} \leqslant \cdots \leqslant \lambda_{n}.
    \label{eq: g til coord}
\end{align}

We have that the quantities $\lambda_{n}, \eta_{1}, tr_{\omega}\alpha$ are uniformily equivalent:

\noindent
\begin{equation}\label{eq: quant equiv}
\frac{1}{n} tr_{g}\widetilde{g} \leqslant \lambda_{n} \leqslant \eta_{1} \leqslant (n-1)\lambda_{n} \leqslant (n-1) tr_{g}\widetilde{g}.
\end{equation}

As $\eta_{1}$ is the largest eigenvalue of $(\eta_{i\bar{\jmath}})$ the maximum of $H$ is achieved for the vector $\xi_{0} = \frac{\partial}{\partial z^{1}}$
that can be locally extended to $\xi_{0} = g_{1\bar{1}}^{-\frac{1}{2}}\frac{\partial}{\partial z^{1}}$.
Finally we set:

\noindent
\begin{equation}\label{eq: functional Q}
    Q(x) = H(x, \xi_{0}) = \log \left( g_{1\bar{1}}^{-1}\eta_{1} \right) + \psi_{1}(| \nabla u |_{g}^{2}) + \psi_{2}(u).
\end{equation}

The maximum of $Q$ is achieved at $x_{0}$.
We will prove that $\eta_{1}(x_{0}) \leqslant CK$, finishing the proof.

We may assume without loss of generality, $\eta_{1} >1$ and $u_{1\bar{1}}>0$ at $x_{0}$. We define the tensor $\Theta^{i\bar{\jmath}}$, and its linear operator, as in~\cite[page 324]{TW17}:

\noindent
\begin{equation}\label{eq: tensor theta}
    \Theta^{i\bar{\jmath}} =  \frac{1}{n-1}\left( (tr_{g}\widetilde{g})g^{i\bar{\jmath}} - \widetilde{g}^{i\bar{\jmath}} \right) >0, \quad L(v) = \Theta^{i\bar{\jmath}} v_{i\bar{\jmath}} = \frac{1}{n-1} \left( (tr_{g}\widetilde{g})\Delta v - \widetilde{\Delta} v \right).
\end{equation}

By the choice of $g$-normal coordinates we get $\nabla \xi_{0} (x_{0}) = 0$, for the covariant derivative, giving us the identities at $x_{0}$, analogous to~\cite[(4.7) and (4.9)]{TW17}:

\noindent
\begin{equation}\label{eq: Qi}
    0 = Q_{i} = \frac{\eta_{1i}}{\eta_{1}} + \psi_{1}^{\prime} \cdot \left( \sum_{p}u_{p}u_{\bar{p}i} + \sum_{p}u_{pi}u_{\bar{p}} \right) + \psi_{2}^{\prime} \cdot u_i .
\end{equation}

Since $\Theta^{i\bar{\jmath}}$ is diagonal at $x_{0}$, and writing $(\xi^{i})$ for the coordinates of $\xi_{0}$, we obtain:

\noindent
\begin{equation}
\begin{split}\label{eq: L(Q)}
    L(Q) = & \sum_i \frac{\Theta^{i\bar{\imath}} \eta_{1 i\bar{\imath}}}{\eta_{1}} - \sum_i \frac{\Theta^{i\bar{\imath}} | \eta_{1i}|^2}{(\eta_{1})^2}
+ \sum_i\Theta^{i\bar{\imath}}(\xi^1_{i\bar{\imath}} + \overline{\xi^1_{\bar{\imath} i}}) + \psi_{2}^{\prime} \sum_i \Theta^{i\bar{\imath}} u_{i\bar{\imath}} \\
& + \psi_{2}^{\prime \prime} \sum_i \Theta^{i\bar{\imath}} |u_i|^2 + \psi_{1}^{\prime \prime} \sum_i \Theta^{i\bar{\imath}} \left| \sum_p u_p u_{\bar{p} i} + \sum_p u_{pi} u_{\bar{p}} \right|^2 \\
& + \psi_{1}^{\prime} \sum_{i,p} \Theta^{i\bar{\imath}} |u_{p \bar{\imath}}|^2  + \psi_{2}^{\prime} \sum_{i,p} \Theta^{i\bar{\imath}} |u_{pi}|^2 + \psi_{1}^{\prime} \sum_{i,p} \Theta^{i\bar{\imath}} (u_{pi\bar{\imath}} u_{\bar{p}} + u_{\bar{p} i \bar{\imath}} u_p),
\end{split}
\end{equation}

After tricky direct computations, differentiating covariantly the $(n-1)$Monge-Ampère equation
and commuting derivaties as in~\cite[page 552]{HMW10} we get, since $L(Q) \leqslant 0$, at $x_{0}$:

\noindent
\begin{equation}
\begin{split}\label{ineq: main ineq}
0 \geqslant & \frac{\sum_{i,j} \widetilde{g}^{i\bar{\imath}} \widetilde{g}^{j\bar{\jmath}} ( g_{j\bar{\imath}} \sum_a u_{a\bar{a} \bar{1}} - u_{j\bar{\imath} \bar{1}}+ \widehat{h}_{j\bar{\imath}\bar{1}}) (g_{i\bar{\jmath}} \sum_b u_{b\bar{b} 1} - u_{i\bar{\jmath}1} + \widehat{h}_{i\bar{\jmath}1})}{{(n-1)^2\eta_{1}}}   - \sum_i \frac{\Theta^{i\bar{\imath}} | \eta_{1i}|^2}{(\eta_{1})^2} \\
& + \frac{1}{\eta_{1}} \left(  F_{1\bar{1}} - \sum_{i} \Theta^{i\bar{\imath}} \left( \sum_a u_{a\bar{\imath}} R_{i \bar{a} 1 \bar{1}} - \sum_a u_{a\bar{1}} R_{i \bar{a} 1 \bar{\imath}}\right)  \right. \\
& \left. - \sum_i \widetilde{g}^{i\bar{\imath}} h_{i\bar{\imath}1\bar{1}} - \sum_i \Theta^{i\bar{\imath}} \left(  \widehat{h}_{1\bar{1} i\bar{\imath}} - (tr_{g}{h})_{i\bar{\imath}} \right) \right) \\
& + \psi_{2}^{\prime} \sum_i \Theta^{i\bar{\imath}} u_{i\bar{\imath}}
+ \psi_{2}^{\prime \prime} \sum_i \Theta^{i\bar{\imath}} |u_i|^2 + \psi_{1}^{\prime \prime} \sum_i \Theta^{i\bar{\imath}} \left| \sum_p u_p u_{\bar{p} i} + \sum_p u_{pi} u_{\bar{p}} \right|^2 \\
&+ \psi_{1}^{\prime} \sum_{i,p} \Theta^{i\bar{\imath}} |u_{p \bar{\imath}}|^2  + \psi_{1}^{\prime} \sum_{i,p} \Theta^{i\bar{\imath}} |u_{pi}|^2 + \psi_{1}^{\prime} \sum_{i,p} \Theta^{i\bar{\imath}} (u_{pi\bar{\imath}} u_{\bar{p}} + u_{\bar{p} i \bar{\imath}} u_p),
\end{split}
\end{equation}

\noindent
which coresponds to~\cite[(4.16)]{TW17}, and all the intermediate steps are the same,
for $e^{F} = f$ and $\widehat{h}_{i\bar{\jmath}} = (n-1)h_{i\bar{\jmath}}$.
Since all the intermediate computations will be the same as in~\cite{TW17}
we skip them focusing only on describing the constants and inequalities more precisely:

\textbf{In~\cite[Equation (4.17)]{TW17}:} at $x_{0}$

\noindent
\begin{equation}\label{eq: 4.17}
    \sum \Theta^{i\bar{\imath}} = tr_{\widetilde{g}}g, \quad \text{and} \quad |u_{i\bar{\jmath}}| = | \eta_{i\bar{\jmath}} + (n-1)h_{i\bar{\jmath}} - (tr_{g}h)g_{i\bar{\jmath}}| \leqslant C_{1}\eta_{1},
\end{equation}

\noindent
for $C_{1} = 1 + (2n-1)||h||_{L^{\infty}}$, by~\eqref{eq: tensor eta}
and since $\eta_{1} >1$ is the highest eigenvalue of $(\eta_{i\bar{\jmath}})$ at $x_{0}$.

Now, we move on to bounding from below the second and third line of~\eqref{ineq: main ineq}

\textbf{In~\cite[Equations (4.18) and (4.19)]{TW17}:}

\noindent
\begin{equation} \label{eq: curvature term}
\begin{split}
& \frac{1}{\eta_{1}} \left(  F_{1\bar{1}} - \sum_{i} \Theta^{i\bar{\imath}}  \left( \sum_a u_{a\bar{\imath}} R_{i \bar{a} 1 \bar{1}} - \sum_a u_{a\bar{1}} R_{i \bar{a} 1 \bar{\imath}}\right) \right.  \\ 
& \left. - \sum_i \widetilde{g}^{i\bar{\imath}} h_{i\bar{\imath}1\bar{1}} - \sum_i \Theta^{i\bar{\imath}} \left( \widehat{h}_{1\bar{1} i\bar{\imath}} - (tr_{g}{h})_{i\bar{\imath}} \right) \right) \geqslant  - C_{2} tr_{\widetilde{g}}{g} - C_{3},
\end{split}
\end{equation}

\noindent
for $C_{2} = 2n(||h||_{C^{2}} +  C_{1}B)$ and $C_{3} = ||F||_{C^{2}}$. Since $\eta_{1}>1$ and using equations~\eqref{eq: 4.17}
with the bound on the bisectional curvature $B$. And by taking the $\widetilde{g}$-trace of $\widetilde{g}$ we get:

\noindent
\begin{equation} \label{eq: Thetauii}
\sum_i \Theta^{i\bar{\imath}} u_{i\bar{\imath}} = \frac{1}{n-1} ( (tr_{\widetilde{g}}{g}) \Delta u - \widetilde{\Delta} u) = n - tr_{\widetilde{g}}{h},
\end{equation}

Now we will bound the last term written in~\eqref{ineq: main ineq}:

\textbf{In~\cite[Equations (4.20) and (4.23)]{TW17}:}

Again by using the covariant differentiation of the PDE
and commuting derivatives:

\noindent
\begin{equation}\label{eq: third order term}
\begin{split}
\lefteqn{ \psi_{1}^{\prime} \sum_{i,p} \Theta^{i\bar{\imath}} (u_{pi\bar{\imath}} u_{\bar{p}} + u_{\bar{p} i\bar{\imath}} u_p )} \\={} & \psi_{1}^{\prime} \sum_p ( F_p u_{\bar{p}} + F_{\bar{p}} u_p)
- \psi_{1}^{\prime} \sum_{i,p} \widetilde{g}^{i\bar{\imath}} (h_{i\bar{\imath}p} u_{\bar{p}} + h_{i\bar{\imath} \bar{p}} u_p)
\\
& + \psi_{1}^{\prime} \sum_{i,p,q} \Theta^{i\bar{\imath}} u_q u_{\bar{p}} R_{p\bar{q} i\bar{\imath}}  \\
 \geqslant {} & - ||F||_{C^{1}}(1 + | \nabla u|^2_g)  | \psi_{1}^{\prime}| - (||h||_{C^{1}} + n^{2}B)tr_{\widetilde{g}}{g} (| \nabla u|^{2}_{g} + 1)  | \psi_{1}^{\prime}| \\
 \geqslant {} &  - C_{4}tr_{\widetilde{g}}g - C_{5}.
\end{split}
\end{equation}

\noindent
for $C_{4} = \frac{||h||_{C^{1}} + n^{2}B}{2}$ and $C_{5} = \frac{||F||_{C^{1}}}{2}$.
By using the Cauchy-Buniakovski-Schwarz inequality, the bound on the bisectional curvature $B$, $ \frac{1}{2K} \geqslant \psi_{1}^{\prime} \geqslant \frac{1}{4K}$ and~\eqref{eq: 4.17}.

Combining~\eqref{ineq: main ineq} with~\eqref{eq: curvature term},~\eqref{eq: Thetauii} and~\eqref{eq: third order term} yields:

\noindent
\begin{equation} \label{ineq: main ineq 2}
\begin{split}
0 \geqslant {} &  \frac{\sum_{i,j} \widetilde{g}^{i\bar{\imath}} \widetilde{g}^{j\bar{\jmath}} ( g_{j\bar{\imath}} \sum_a u_{a\bar{a} \bar{1}} - u_{j\bar{\imath} \bar{1}}+ \widehat{h}_{j\bar{\imath}\bar{1}}) (g_{i\bar{\jmath}} \sum_b u_{b\bar{b} 1} - u_{i\bar{\jmath}1} + \widehat{h}_{i\bar{\jmath}1})}{{(n-1)^2\eta_{1}}}   - \sum_i \frac{\Theta^{i\bar{\imath}} | \eta_{1i}|^2}{(\eta_{1})^2} \\
& + \psi_{2}^{\prime \prime} \sum_i \Theta^{i\bar{\imath}} |u_i|^2 + \psi_{1}^{\prime \prime} \sum_i \Theta^{i\bar{\imath}} \left| \sum_p u_p u_{\bar{p} i} + \sum_p u_{pi} u_{\bar{p}} \right|^2 + \psi_{1}^{\prime} \sum_{i,p} \Theta^{i\bar{\imath}} |u_{p \bar{\imath}}|^2 \\
& + \psi_{1}^{\prime} \sum_{i,p} \Theta^{i\bar{\imath}} |u_{pi}|^2 - tr_{\widetilde{g}}g (C_{\beta, \omega}\psi_{2}^{\prime}  + C_{2} + C_{4}) - (n \psi_{2}^{\prime} + C_{3} + C_{5}),
\end{split}
\end{equation}

\noindent
since $ -\frac{A}{L} \leqslant \psi_{2}^{\prime} \leqslant -\frac{A}{2L} = -(C_{0} +1 )$.
Then by choosing $C_{0} = \frac{C_{2} + C_{4}}{C_{\beta, \omega}} + 2\frac{n || h ||_{C^{1}}^{2}}{(n-1)C_{\beta, \omega}}$
the only term left to control is $\sum_i \frac{\Theta^{i\bar{\imath}} | \eta_{1i}|^2}{(\eta_{1})^2}$ as the others are positive.
As in~\cite[page 328]{TW17} we define:

\noindent
\[
\delta = \frac{1}{1+ 2A} = \frac{1}{1 + 4L(C_{0} + 1)},
\]

\noindent
which is uniform since so are $L$ and $C_{0}$.
Now we have two cases as in~\cite{HMW10,TW17}.

\smallskip
\textbf{Case 1} (Assume $\lambda_{2} \leqslant (1-\delta)\lambda_{n}$). We use~\eqref{eq: Qi} and~\eqref{eq: 4.17} to obtain

\noindent
\begin{equation}
\begin{split}
    - \sum_{i} \frac{\Theta^{i\bar{\imath}} |\eta_{1\bar{1}i}|^{2}}{(\eta_{1})^{2}} & = - \sum_{i} \Theta^{i\bar{\imath}} \left|\psi_{1}^{\prime}\left( \sum_{p} u_{p} u_{\bar{p}i}  +  \sum_{p} u_{pi} u_{\bar{p}}  \right) + \psi_{2}^{\prime} u_{i} \right|^{2} \\
    & -2(\psi_{1}^{\prime})^{2} \sum_{i} \Theta^{i\bar{\imath}} \left| \sum_{p} u_{p} u_{\bar{p}i} + \sum_{p} u_{pi} u_{\bar{p}} \right|^{2} - 8(C_{0} +1)^{2}K tr_{\widetilde{g}}g,
\end{split}
\end{equation}

\noindent
since $|\psi_{2}^{\prime}| \leqslant \frac{A}{L} = 2(C_{0} + 1)$.
Then, using $\psi_{1}^{\prime \prime} = 2(\psi_{1}^{\prime})^{2}, \psi_{2}^{\prime \prime} > 0, \psi_{1}^{\prime}, -\psi_{2}^{\prime} - C_{0} \geqslant 1, \psi_{1}^{\prime} \geqslant \frac{1}{4K}$
and erasing positive unhelpful terms we obtain from~\eqref{ineq: main ineq 2}

\noindent
\begin{equation} 
0  \geqslant \frac{1}{4K} \Theta^{n\bar{n}}u_{n\bar{n}} - 8(C_{0} + 1)^{2}K tr_{\widetilde{g}}g - C_{6},
\end{equation}

\noindent
for $C_{6} = C_{3} + C_{5}$. Since $\Theta^{n\bar{n}} \geqslant \frac{1}{n} \sum_{i} \Theta^{i \bar{\imath}} \geqslant \frac{1}{n} tr_{\widetilde{g}} g$:

\noindent
\begin{equation} \label{ineq: case 1 final ineq}
\begin{split}
\frac{1}{n} (tr_{\widetilde{g}}g)u_{n\bar{n}}^{2} \leqslant 32(C_{0} + 1)^{2}K^2 tr_{\widetilde{g}}g  + C_{6}K.   
\end{split}
\end{equation}

By the assumption $\lambda_{2} \leqslant (1 - \delta)\lambda_{n}$
and~\eqref{eq: tensor eta}

\noindent
\[
u_{n\bar{n}} \leqslant \lambda_{1} - \delta \lambda_{n} + (2n - 1) ||h||_{L^{\infty}} \leqslant - \frac{\delta}{2}\lambda_{n}
\]

\noindent
since $\delta <1$ and without loss of generality $\lambda_{1} < 1$ and $\frac{\lambda_{n}}{2} \geqslant \frac{(2n - 1) ||h||_{L^{\infty}} + 1}{\delta}$.
Hence $u_{n\bar{n}}^{2} \geqslant \frac{\delta^{2}}{4}\lambda_{n}$, with~\eqref{ineq: case 1 final ineq} one concludes

\noindent
\[
\lambda_{n} \leqslant C_{7}K
\]

for $C_{7} = \frac{8n C_{6}}{\delta^{2}}$, since $1 \leqslant tr_{\widetilde{g}}g \cdot \lambda_{n}$
and without loss of generality $\frac{\lambda_{n}^{2}}{2} \geqslant \frac{32(C_{0} + 1)^{2} K^{2} (4n)}{\delta^{2}}$.

By~\eqref{eq: quant equiv}, this implies the desired estimate $\eta_{1} \leqslant (n-1)C_{7} K$.

\smallskip
\textbf{Case 2} (Assume $\lambda_{2} \geqslant (1-\delta)\lambda_{n}$). Similar to case one, we have for the summand $i=1$

\noindent
\[
- \frac{\Theta^{1\bar{1}}|\eta_{1\bar{1}1}|^{2}}{(\eta_{1})^{2}} \geqslant -2(\psi_{1}^{\prime})^{2}  \Theta^{1\bar{1}} \left| \sum_{p} u_{p} u_{\bar{p}i} + \sum_{p} u_{pi} u_{\bar{p}} \right|^{2} - 8(C_{0} +1)^{2}n K \Theta^{1\bar{1}},
\]

\noindent
combining it with $2(\psi_{1}^{\prime})^{2} = \psi_{1}^{\prime \prime}, \psi_{1}^{\prime} \geqslant \frac{1}{4K}, -C_{\beta, \omega}\psi_{2}^{\prime} - C_{2} - C_{4} \geqslant 2\frac{n || h ||_{C^{1}}^{2}}{(n-1)}$ and~\eqref{ineq: main ineq 2}

\noindent
\begin{equation} \label{ineq: ineq case 2 semi final}
\begin{split}
0 \geqslant {} &  \frac{\sum_{i,j} \widetilde{g}^{i\bar{\imath}} \widetilde{g}^{j\bar{\jmath}} ( g_{j\bar{\imath}} \sum_a u_{a\bar{a} \bar{1}} - u_{j\bar{\imath} \bar{1}}+ \widehat{h}_{j\bar{\imath}\bar{1}}) (g_{i\bar{\jmath}} \sum_b u_{b\bar{b} 1} - u_{i\bar{\jmath}1} + \widehat{h}_{i\bar{\jmath}1})}{{(n-1)^2\eta_{1}}}   - \sum_{i = 2}^{n} \frac{\Theta^{i\bar{\imath}} | \eta_{1i}|^2}{(\eta_{1})^2} \\
& + \psi_{2}^{\prime \prime} \sum_i \Theta^{i\bar{\imath}} |u_i|^2 + \psi_{1}^{\prime \prime} \sum_{i = }^{n} \Theta^{i\bar{\imath}} \left| \sum_p u_p u_{\bar{p} i} + \sum_p u_{pi} u_{\bar{p}} \right|^2 \\
& + \frac{1}{4K} \sum_{i,p} \Theta^{i\bar{\imath}} |u_{pi}|^2 + C_{8} tr_{\widetilde{g}}g - 8(C_{0} + 1)^{2} n K \Theta^{1\bar{1}} - C_{6},
\end{split}
\end{equation}

\noindent
for $C_{8} = 2\frac{n}{n-1}|| h ||_{C^{1}}^{2}$. Without loss of generality, we assume $ \frac{1}{4K} \Theta^{1\bar{1}}u_{1\bar{1}}^{2} \geqslant 8(C_{0} + 1)^{2} n K \Theta^{1\bar{1}}$
then with Lemma~\ref{lema: final bound C2 case 2} below,
which is a quantitative version of~\cite[Lemma 4.3]{TW17},
we conclude $0 \geqslant \frac{C_{8}}{2}tr_{\widetilde{g}}g - C_{6}$
which implies the desired estimate

\noindent
\[
\eta_{1} \leqslant (n-1) ||f||_{L^{\infty}} \left( \frac{2 C_{6}}{C_{8}} \right)^{n-1}.
\]
\end{proof}

\begin{lemma} \label{lema: final bound C2 case 2}
We have, at  $x_{0}$,

\noindent
\begin{equation*}
\begin{split}
E  & := \frac{\sum_{i,j} \widetilde{g}^{i\bar{\imath}} \widetilde{g}^{j\bar{\jmath}} ( g_{j\bar{\imath}} \sum_a u_{a\bar{a} \bar{1}} - u_{j\bar{\imath} \bar{1}}+ \widehat{h}_{j\bar{\imath}\bar{1}}) (g_{i\bar{\jmath}} \sum_b u_{b\bar{b} 1} - u_{i\bar{\jmath}1} + \widehat{h}_{i\bar{\jmath}1})}{{(n-1)^2\eta_{1}}}   - \sum_{i = 2}^{n} \frac{\Theta^{i\bar{\imath}} | \eta_{1i}|^2}{(\eta_{1})^2} \\
& + \psi_{2}^{\prime \prime} \sum_i \Theta^{i\bar{\imath}} |u_i|^2 + \psi_{1}^{\prime \prime} \sum_{i = }^{n} \Theta^{i\bar{\imath}} \left| \sum_p u_p u_{\bar{p} i} + \sum_p u_{pi} u_{\bar{p}} \right|^2 \geqslant - \frac{C_{8}}{2} tr_{\widetilde{g}}g
\end{split}
\end{equation*}

\noindent
for $C_{8} = 2\frac{n}{n-1}|| h ||_{C^{1}}^{2}$.
\end{lemma}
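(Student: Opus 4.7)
The plan is to prove a quantitative version of \cite[Lemma 4.3]{TW17}, carefully tracking how the constant $C_8=\tfrac{2n}{n-1}\|h\|_{C^1}^2$ emerges. The strategy is to match the positive third-order term $E_1$ (the first summand in $E$) against the negative third-order term $-\sum_{i\geq 2}\Theta^{i\bar\imath}|\eta_{1i}|^2/\eta_1^2$, absorbing the remainder into the positive $\psi_1''$- and $\psi_2''$-pieces via the first-order condition \eqref{eq: Qi}; the residual will arise from commutators of derivatives of $h=\beta$ (which is hermitian but not Kähler).

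At $x_0$ in the diagonal coordinates \eqref{eq: g e eta coord}--\eqref{eq: g til coord}, differentiating \eqref{eq: tensor eta} in $\partial_1$ yields $(n-1)\widetilde g_{i\bar\jmath 1} = g_{i\bar\jmath}\sum_a u_{a\bar a 1} - u_{i\bar\jmath 1} + \widehat h_{i\bar\jmath 1}$, so that
\[
E_1 = \sum_{i,j}\frac{|\widetilde g_{i\bar\jmath 1}|^2}{\eta_1\,\lambda_i\lambda_j}.
\]
From $\eta_{i\bar\jmath} = (\mathrm{tr}_g\widetilde g)g_{i\bar\jmath} - (n-1)\widetilde g_{i\bar\jmath}$ one also obtains $\eta_{1i} = \sum_k \widetilde g_{k\bar k i} - (n-1)\widetilde g_{1\bar 1 i}$. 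A Cauchy--Schwarz with free parameter $\varepsilon>0$ gives
\[
|\eta_{1i}|^2 \leq (1+\varepsilon)(n-1)^2|\widetilde g_{1\bar 1 i}|^2 + (1+\varepsilon^{-1})\Big|\sum_k \widetilde g_{k\bar k i}\Big|^2.
\]
The first piece, weighted by $\Theta^{i\bar\imath}/\eta_1^2$, is absorbed into the off-diagonal part $\sum_{i\geq 2}|\widetilde g_{i\bar 1 1}|^2/(\eta_1\lambda_1\lambda_i)$ retained from $E_1$: Case 2's hypothesis $\lambda_2\geq (1-\delta)\lambda_n$ makes $\lambda_i$ for $i\geq 2$ comparable to $\eta_1$, so the weights align after using the symmetry $\widetilde g_{1\bar 1 i} = \widetilde g_{i\bar 1 1} + O(\|h\|_{C^1})$ valid at $x_0$ by the Kähler hypothesis on $g$ and the commutation of partial derivatives of $u$. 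For the second piece, the identity $\sum_k \widetilde g_{k\bar k i} = \eta_{1i} + (n-1)\widetilde g_{1\bar 1 i}$ combined with \eqref{eq: Qi} gives a bound of the form $C\eta_1^2\bigl((\psi_1')^2|\cdot|^2 + (\psi_2')^2|u_i|^2\bigr)$, which the calibrations $\psi_1''=2(\psi_1')^2$ and $\psi_2''=(\psi_2')^2/A$ from \eqref{eq: test function 1}--\eqref{eq: test function 2} absorb into $\psi_1''\sum\Theta^{i\bar\imath}|\cdot|^2 + \psi_2''\sum\Theta^{i\bar\imath}|u_i|^2$ provided $A=2L(C_0+1)$ is large enough.

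The main obstacle is engineering the parameters $\varepsilon$ and $A$ so that both pieces are absorbed simultaneously while leaving exactly the residual $-\tfrac{C_8}{2}\mathrm{tr}_{\widetilde g}g$. The hermitian (non-Kähler) character of $h$ enters only through the commutator $\widetilde g_{1\bar 1 i} - \widetilde g_{i\bar 1 1} = h_{1\bar 1 i} - h_{i\bar 1 1}$ at $x_0$, which is bounded by $2\|h\|_{C^1}$; Cauchy--Schwarz on the resulting cross terms produces the multiplier $\tfrac{n}{n-1}$ and yields the advertised constant $C_8/2 = \tfrac{n}{n-1}\|h\|_{C^1}^2$ in the lower bound.
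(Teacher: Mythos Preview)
There is a genuine gap in two places. First, the claimed commutation $\widetilde g_{1\bar 1 i}=\widetilde g_{i\bar 1 1}+O(\|h\|_{C^1})$ is false. At $x_0$, in the $g$-normal coordinates, one computes from $\widetilde g_{i\bar\jmath}=h_{i\bar\jmath}+\frac{1}{n-1}((\Delta u)g_{i\bar\jmath}-u_{i\bar\jmath})$ that for $i\geqslant 2$,
\[
\widetilde g_{1\bar 1,i}-\widetilde g_{i\bar 1,1}=(h_{1\bar 1,i}-h_{i\bar 1,1})+\tfrac{1}{n-1}(\Delta u)_i,
\]
so the discrepancy carries the third-order term $(\Delta u)_i/(n-1)$, which is \emph{not} controlled by $\|h\|_{C^1}$. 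Your ``first piece'' therefore cannot be fed into the off-diagonal part of $E_1$ as written. Second, the ``second piece'' estimate is circular: the identity $\sum_k\widetilde g_{k\bar k,i}=\eta_{1\bar 1 i}+(n-1)\widetilde g_{1\bar 1,i}$ together with \eqref{eq: Qi} gives
\[
\sum_k\widetilde g_{k\bar k,i}=-\eta_1\bigl(\psi_1'(\cdots)+\psi_2'u_i\bigr)+(n-1)\widetilde g_{1\bar 1,i},
\]
so $|\sum_k\widetilde g_{k\bar k,i}|^2$ cannot be bounded purely by $\eta_1^2\bigl((\psi_1')^2|\cdots|^2+(\psi_2')^2|u_i|^2\bigr)$ without reintroducing the very $|\widetilde g_{1\bar 1,i}|^2$ whose $E_1$-budget you have already spent. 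Finally, the calibration claim goes the wrong way: since $\psi_2''=(\psi_2')^2/A$, taking $A$ large makes $\psi_2''$ \emph{smaller} relative to $(\psi_2')^2$, so absorption of a fixed multiple of $(\psi_2')^2|u_i|^2$ becomes harder, not easier.

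The paper avoids all this by applying \eqref{eq: Qi} directly to the $\psi_1''$-term via the inequality $|a+b|^2\geqslant\delta'|a|^2-\tfrac{\delta'}{1-\delta'}|b|^2$ of \cite[Proposition~2.3]{HMW10}, with $\delta'=\delta=\tfrac{1}{2A+1}$: this extracts only a $2\delta$-fraction of $\sum_{i\geqslant 2}\Theta^{i\bar\imath}|\eta_{1i}|^2/\eta_1^2$ from the $\psi_1''$-term, and the residual $\tfrac{2\delta}{1-\delta}(\psi_2')^2|u_i|^2$ is then genuinely small (of order $1/A$) and is absorbed by $\psi_2''$. One is left with $E\geqslant E_1-(1-2\delta)\sum_{i\geqslant 2}\Theta^{i\bar\imath}|\eta_{1i}|^2/\eta_1^2$, and the remaining comparison is carried out directly in terms of $\eta_{1\bar\imath}$ (not $\widetilde g_{1\bar 1,i}$): the off-diagonal part of $E_1$ is rewritten, after commuting derivatives and another application of the same inequality, as $(1-\tfrac{\delta}{2})\sum_{i\geqslant 2}\tfrac{\widetilde g^{i\bar\imath}\widetilde g^{1\bar 1}|\eta_{1\bar\imath}|^2}{(n-1)^2\eta_1}$ minus an error involving only $[(\mathrm{tr}_g h)g_{1\bar 1}]_{\bar\imath}$, which is where the constant $C_8=\tfrac{2n}{n-1}\|h\|_{C^1}^2$ actually appears. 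The final step $(1-\tfrac{\delta}{2})\tfrac{\widetilde g^{i\bar\imath}\widetilde g^{1\bar 1}}{(n-1)^2\eta_1}\geqslant(1-2\delta)\tfrac{\Theta^{i\bar\imath}}{\eta_1^2}$ is \cite[(4.39)]{TW17} and uses Case~2 through the comparability of $\lambda_i$ with $\eta_1$ for $i\geqslant 2$.
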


\begin{proof}
Using~\eqref{eq: Qi}, $\psi_{1}^{\prime \prime} = 2(\psi_{1})^{2}$ and~\cite[Proposition 2.3]{HMW10} that is $\forall a;b \in \mathbb{C}^{n}$ and $ 0 < \delta^{\prime} < 1$, $| a + b |^{2} \geqslant \delta^{\prime} | a |^{2} - \frac{\delta^{\prime}}{1 - \delta^{\prime}} | b |^{2}$
we have

\noindent
\begin{equation} \label{ineq: lemma C2 first bound}
\begin{split}
\psi_{1}^{\prime \prime} \sum_{i = 2}^{n} \Theta^{i \bar{\imath}} \left| \sum_{p}u_{p} u_{\bar{p}i} + \sum_{p} u_{pi} u_{\bar{p}}  \right|^{2} \geqslant 2 \delta \sum_{i = 2}^{n} \Theta^{i \bar{\imath}} \frac{| \eta_{1 i} |^{2}}{(\eta_{1})^{2}} - \frac{2\delta (\psi_{2}^{\prime})^{2}}{1 - \delta} \sum_{i = 2}^{n} \Theta^{i \bar{\imath}} |u_{i}|^{2}
\end{split}    
\end{equation}
 
Combining it with the fact that $\psi_{2}^{\prime \prime} \geqslant \frac{2 \varepsilon}{1 - \varepsilon} (\psi_{2}^{\prime})^{2}$ for all $\varepsilon \leqslant \frac{1}{2A + 1}$, recall $\delta = \frac{1}{2A + 1}$, we obtain:

\noindent
\begin{equation} \label{ineq: main ineq lema C2}
\begin{split}
E \geqslant \frac{\sum_{i,j} \widetilde{g}^{i\bar{\imath}} \widetilde{g}^{j\bar{\jmath}} ( g_{j\bar{\imath}} \sum_a u_{a\bar{a} \bar{1}} - u_{j\bar{\imath} \bar{1}}+ \widehat{h}_{j\bar{\imath}\bar{1}}) (g_{i\bar{\jmath}} \sum_b u_{b\bar{b} 1} - u_{i\bar{\jmath}1} + \widehat{h}_{i\bar{\jmath}1})}{{(n-1)^2\eta_{1}}}   - (1 - 2\delta)\sum_{i = 2}^{n} \frac{\Theta^{i\bar{\imath}} | \eta_{1i}|^2}{(\eta_{1})^2}
\end{split}
\end{equation} 

Commuting derivatives, erasing unhelpful positive terms, using at $x_{0}$~\eqref{eq: tensor eta},
$g_{1 \bar{\imath}} = 0$ for $i >1$ and~\cite[Proposition 2.3]{HMW10} again we obtain:

\noindent
\begin{equation} \label{ineq: final bound 1 lema C2}
\begin{split}
& \frac{\sum_{i,j} \widetilde{g}^{i\bar{\imath}} \widetilde{g}^{j\bar{\jmath}} ( g_{j\bar{\imath}} \sum_a u_{a\bar{a} \bar{1}} - u_{j\bar{\imath} \bar{1}}+ \widehat{h}_{j\bar{\imath}\bar{1}}) (g_{i\bar{\jmath}} \sum_b u_{b\bar{b} 1} - u_{i\bar{\jmath}1} + \widehat{h}_{i\bar{\jmath}1})}{{(n-1)^2\eta_{1}}}  \\
& \geqslant (1 - \frac{\delta}{2})\sum_{i = 2}^{n} \frac{\widetilde{g}^{i\bar{\imath}} \widetilde{g}^{1 \bar{1}} | \eta_{1 \bar{\imath}} |^{2}}{(n-1)^{2} \eta_{1}} - \frac{1 - \frac{\delta}{2}}{\frac{\delta}{2}} \sum_{i = 2}^{n} \frac{\widetilde{g}^{i\bar{\imath}} \widetilde{g}^{1 \bar{1}} | [(tr_{g}h)g_{1\bar{1}}]_{\bar{\imath}} |^{2}}{(n-1)^{2} \eta_{1}}.
\end{split}
\end{equation}

In case 2 we have $\widetilde{g}^{i \bar{\imath}} \leqslant \frac{1}{(1 - \delta)\lambda_{n}}$ for $i >1$. Without loss of generality $(1 - \delta)\lambda_{n} \geqslant \frac{1 - \frac{\delta}{2}}{\frac{\delta}{2}}$ then

\noindent
\[
\frac{1 - \frac{\delta}{2}}{\frac{\delta}{2}} \sum_{i = 2}^{n} \frac{\widetilde{g}^{i \bar{\imath}} \widetilde{g}^{1\bar{1}} | [(tr_{g}h)g_{1\bar{1}}]_{\bar{\imath}} |^{2}}{(n-1)^{2}\eta_{1}} \leqslant \frac{C_{8}}{2} tr_{\widetilde{g}}g,
\]

\noindent
for $C_{8} = 2 \frac{n || h ||_{C^{1}}^{2}}{n-1}$, since the coordinates around $x_{0}$ are $g$-normal.
The only step left is to prove

\noindent
\[
(1 - \frac{\delta}{2})\sum_{i = 2}^{n} \frac{\widetilde{g}^{i\bar{\imath}} \widetilde{g}^{1 \bar{1}} | \eta_{1 \bar{\imath}} |^{2}}{(n-1)^{2} \eta_{1}} \geqslant (1 - 2\delta) \sum_{i = 2}^{n} \Theta^{i \bar{\imath}} \frac{| \eta_{1 \bar{i}} |^{2}}{(\eta_{1})^{2}},
\]

\noindent
which is exactly~\cite[Equation (4.39)]{TW17}
hence we refer the reader to it for the proof.
\end{proof}

\subsection{Proof of the \texorpdfstring{$C^{2}$}{C2} estimate on holomorphic families}

By Theorem~\ref{teo: C2 estimate hol fam}
we know that the constant for a $C^{2}$ norm depends on:

\begin{enumerate}
    \item $B>0$ uniform upper and lower bound on the bisectional curvature.
    
    \item $C_{L^{\infty}}$ the uniform $L^{\infty}$ bound on the solution to~\eqref{eq: n-1 MA HF}.
    
    \item The $C^{2}$ norm of $\log(f)$ and the metrics $\omega$ and $\beta$, also the $L^{\infty}$ norm of $f$.
    
\end{enumerate}

Under~\ref{set: sing family setting} without singular fibers
we have the uniformity of these constants:

\begin{enumerate}
    \item By compactness of the fibers the constant $B$ can be taken uniform on compact subsets of the base manifold $\mathbb{D}$, 
    hence it is uniform in the holomorphic family.

    \item By Theorem~\ref{teo: C0 Vincent} the constant $C_{L^{\infty}}$ is uniform even if the family has singularities.

    \item All the norms will be uniform on compact subsets of $\mathbb{D}$ by compactness as well.

\end{enumerate}

Hence, the $C^{2}$ bound, that depends on the $|\nabla u|^{2}_{g}$,
is uniform along a holomorphic family.

\section{Proof of~\ref{teo: estimates HF}}\label{CYG: C1 e HO HF}

\subsection{\texorpdfstring{$C^{1}$}{C1} Estimate on holomorphic families}\label{CYG: C1}

Since the $C^{2}$-estimate on the previous section depends on $\sup_{M}|\nabla u|_{g}^{2}$
we adapt the blow-up argument of~\cite{TW17}, based on~\cite{DK17}, to obtain a $C^{1}$ estimate for Holomorphic families.

\begin{theorem}\label{teo: C1 hol families}

Given a holomorphic family $(\mathcal{M}, \beta, \omega)$
in the fixed setting. The there is a constant $C>0$ which 
only depends on $||f||_{C^{2}(\mathcal{M},\omega)}$ and $(M, \beta_{0}, \omega_{0})$, 
such that

\noindent
\[
\underset{\mathbb{D}_{\frac{1}{2}}}{\sup} |\nabla u|_{\omega} \leqslant C.
\]

In particular, the constant $C$ does not depend on the complex structure.

\end{theorem}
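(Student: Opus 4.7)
The plan is to prove the bound by a blow-up and Liouville argument adapted from~\cite{TW17,DK17}, paying attention to the uniformity of all ingredients in the family parameter $t$. Assume for contradiction the estimate fails: there exist $t_k \in \overline{\mathbb{D}}_{1/2}$ and $x_k \in M_{t_k}$ with
\[
C_k := |\nabla u_{t_k}|_{\omega_{t_k}}(x_k) = \sup_{M_{t_k}}|\nabla u_{t_k}|_{\omega_{t_k}} \longrightarrow +\infty.
\]
Up to a subsequence $t_k \to t_\infty \in \overline{\mathbb{D}}_{1/2}$. Since $\pi\colon \mathcal{M}\to\mathbb{D}$ is a proper holomorphic submersion (no singular fibers here), Ehresmann's theorem yields a smooth trivialization near $t_\infty$ identifying every nearby fiber with the fixed smooth manifold underlying $M_{t_\infty}$. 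Under this identification, the Hermitian data $(\beta_{t_k},\omega_{t_k})$, the complex structures $J_{t_k}$, and the densities $f_{t_k}$ converge smoothly to $(\beta_{t_\infty},\omega_{t_\infty},J_{t_\infty},f_{t_\infty})$, and one may assume $x_k\to x_\infty$.

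The next step is a rescaling around $x_k$. Pick $J_{t_k}$-holomorphic coordinates centered at $x_k$ on a ball of fixed radius $r$, with $\omega_{t_k}(x_k)=\omega_{\mathbb{C}^n}$, and set
\[
w_k(z) := u_{t_k}\bigl(C_k^{-1}z\bigr), \qquad z\in B_{rC_k}\subset\mathbb{C}^n.
\]
Then $|\nabla w_k|(0)=1$, $|\nabla w_k|\leqslant 1$, and $\|w_k\|_{L^\infty}\leqslant C_{L^\infty}$ by Theorem~\ref{teo: C0 Vincent}. If $\omega_k^\ast, \beta_k^\ast$ denote the pullbacks of $C_k^{2}\omega_{t_k}, C_k^{2}\beta_{t_k}$ under $z\mapsto C_k^{-1}z$, then $w_k$ satisfies
\[
\left( \beta_k^\ast + \frac{(\Delta_{\omega_k^\ast} w_k)\omega_k^\ast - dd^c w_k}{n-1}\right)^{\! n} = c_{t_k}\,C_k^{-2n}\, f_{t_k}\bigl(C_k^{-1}z\bigr)\,(\omega_k^\ast)^n.
\]
As $k\to\infty$, $\omega_k^\ast\to\omega_{\mathbb{C}^n}$ and $\beta_k^\ast\to 0$ smoothly on compacta, and the right-hand side tends to $0$ uniformly since $c_{t_k}$, $f_{t_k}$ are uniformly bounded by Theorem~\ref{teo: C0 Vincent}. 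Using Arzelà–Ascoli together with the compactness properties of $\mathrm{PSH}_{n-1}$ from Proposition~\ref{prop: basic properties}, one extracts a limit $w_\infty\colon\mathbb{C}^n\to\mathbb{R}$ which is Lipschitz, bounded, $(n-1)$-plurisubharmonic, and satisfies $|\nabla w_\infty|(0)=1$.

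To invoke the Liouville theorem~\ref{teo: liouville thm n-1 psh} and close the contradiction, one must verify that $w_\infty$ is maximal in the sense of Definition~\ref{def: maximal n-1 psh}. This is the genuinely delicate point, since the operator $\widehat{MA}$ does not admit a classical comparison principle on arbitrary $(n-1)$-psh functions. Following~\cite[\S 5]{TW17}, this is achieved by constructing explicit competitors through Dirichlet problems for the $(n-1)$-Monge–Ampère equation on relatively compact sub-domains of $\mathbb{C}^n$, with right-hand side matched to the decaying datum of the rescaled equation, and applying the domination principle of Lemma~\ref{lema: DP1} to the pre-limit sequence before passing to the limit. Once maximality is established, Theorem~\ref{teo: liouville thm n-1 psh} forces $w_\infty$ to be constant, contradicting $|\nabla w_\infty|(0)=1$.

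The main obstacle is precisely this verification of maximality, which is the content of the blow-up analysis in~\cite{TW17}; the rest is a matter of bookkeeping. The claim that $C$ depends only on $(M_0,\beta_0,\omega_0)$ and $\|f\|_{C^2(\mathcal{M},\omega)}$ (and not on the complex structure $J_t$) is then automatic: Ehresmann's trivialization makes $J_t\to J_0$ smoothly, so any geometric quantity entering the argument (metric comparison constant $C_{\beta,\omega}$, bisectional curvature bound $B$, and the constants of Theorems~\ref{teo: C0 Vincent} and~\ref{teo: C2 estimate hol fam}) is controlled by the central-fiber data, whence a uniform $C^1$-bound valid on all of $\overline{\mathbb{D}}_{1/2}$.
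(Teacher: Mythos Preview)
Your overall strategy---blow-up plus Liouville, as in \cite{TW17,DK17}---matches the paper's. The genuine gap is in the extraction of the limit: from $\|w_k\|_{L^\infty}\leqslant C_{L^\infty}$ and $|\nabla w_k|\leqslant 1$ alone, Arzel\`a--Ascoli (together with Proposition~\ref{prop: basic properties}) yields only locally \emph{uniform} convergence $w_k\to w_\infty$, not $C^1$ convergence, so the conclusion $|\nabla w_\infty|(0)=1$ is unjustified and the contradiction does not close. The paper fills this by invoking the $C^{2}$ estimate of Theorem~\ref{teo: C2 estimate hol fam}: since $\|dd^{c}u_{t_k}\|_{L^\infty}\leqslant C\bigl(\sup|\nabla u_{t_k}|_{g}^{2}+1\bigr)\leqslant C\,C_k^{2}$, the rescaling $z\mapsto C_k^{-1}z$ gives $\sup_{B_{rC_k}}|dd^{c}w_k|\leqslant C$ uniformly in $k$. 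This bounds $\Delta w_k$ in $L^\infty$, and then elliptic $L^{p}$ estimates for $\Delta$ together with Sobolev embedding upgrade the compactness to $C^{1,\gamma}\cap W^{2,p}$ on every compact set; in particular $\nabla w_k(0)\to\nabla w_\infty(0)$, so $w_\infty$ is nonconstant. This use of Theorem~\ref{teo: C2 estimate hol fam} is precisely why the $C^{2}$ estimate is established \emph{before} the $C^{1}$ estimate, with its constant allowed to depend on $K=\sup|\nabla u|_{g}^{2}+1$.

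Two smaller remarks. First, your rescaled equation is miscomputed: with $\omega_k^{\ast},\beta_k^{\ast}$ defined as the pullbacks of $C_k^{2}\omega_{t_k},C_k^{2}\beta_{t_k}$ under $z\mapsto C_k^{-1}z$, one has $\beta_k^{\ast}\to\beta_{t_\infty}(x_\infty)>0$, not $0$; the term that actually appears in the pulled-back equation and tends to $0$ is $C_k^{-2}\beta_k^{\ast}=\Phi_k^{\ast}\beta_{t_k}$. Second, Lemma~\ref{lema: DP1} is a domination principle for smooth functions on a \emph{compact} manifold and is not the right tool for the maximality step on domains in $\mathbb{C}^{n}$; as the paper does, you should simply defer that step to the Dirichlet--barrier argument in \cite[\S5]{TW17}.
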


\begin{proof}

Assume by contradiction, that $||f||_{C^{2}(\mathcal{M},\omega)} \leqslant C$ 
uniform and we have $ u_{t}, C_{t}$ such that:

\noindent
\[
\underset{M_{t}}{\sup} | \nabla u_{t}|_{\omega_{t}} = C_{t}
\]

\noindent
with $C_{t} \to +\infty$ as $t \to t_{0}$, for simplicity
we'll set $t_{0} = 0$. Then, we take a sequence of indexes $\{ t_{j}\}$,
simplifying notation just as $\{j\}$, such that $\underset{M_{j}}{\sup} |\nabla u_{j}|_{\omega_{j}} = C_{j}$,
for $C_{j} \to +\infty$ as $j \to +\infty$.

We have: $\underset{M_{j}}{\sup}~ u_{j} = 0$, $\underset{M_j}{\sup}~ |u_j| \leqslant C$ (independent of $j$ from the $L^{\infty}$-estimate) and

\noindent
\[
\alpha_{j} = \beta_{j} + \frac{1}{n-1}((\Delta_{j}u_{j})\omega_{j} - dd^{t}u_{j}), \quad
\alpha^{n}_{j} = f_{j} \omega_{j}^{n}.
\]

Let $x_j = \underset{M_{j}}{\text{arcmax}}~ |\nabla u_{j}|$. 
Up to extracting a subsequence $x_j \to x_{0} \in M = M_{0}$, for $t_{j} \to 0$.

Since the family is holomorphic then locally one has 
$V \subset \mathcal{M}$ open such that $V_{t} \subset X_{t}$,
$V_{0} \subset X = X_{0}$, $t \in I \subset B$ and set
$V = B_{2}(0) \times I$ such that $\omega(0) = \omega_{\mathbb{C}^{n}} = \sum_{j} i dz^{j} \wedge d \overline{z}^{j}$
for $\omega = \omega_{0}$, metric on fiber $X$ whenever confusion can be avoided.

Assume for $j$ large enough $x_{t_{j}}$ are all in $B_{1}(0) \times I^{\prime}$;
$I^{\prime} \Subset I$. Define in $B_{C_{j}}(0) \times I \subset \mathbb{C}^{n} \times I$:

\noindent
\[
\widehat{u}(z, t) = u_{j}(C^{-1}_{j} z + x_{j} ; t) ;  \quad \Phi_{j}: \mathbb{C}^{n} \to \mathbb{C}^{n}, \Phi_{j}(z) = C^{-1}_{j} z + x_{j}
\]

Since the family is holomorphically trivial, we will
have $u_{j},u$ well defined in small neighbourhoods and denote
for simplicity as if all the points are on $B_{1}(0) \subset X_{0}$. Then we have:

\noindent
\[
\underset{B_{C_{j}(0)}}{\sup}~ |\widehat{u}_{t,j}| \leqslant C; \forall t \in I, \forall j~~\text{big enough}, \quad \text{for}~~ \widehat{u}_{t,j} = \widehat{u}_{j}(\cdot , t).
\]

\noindent
and fixing $t \in I$ we have

\noindent
\[
\nabla^{g_{t}} \widehat{u}_{t,j} = \nabla^{g_{t}} u_{t,j} C^{-1}_{t,j} Id
\]

\noindent
then $| \nabla^{g_{t}}\widetilde{u}_{t,j} |(0) = 1$.
Hence $\underset{X_{j}}{\sup}~| \nabla^{g_{j}} \widehat{u}_{j} | \leqslant C$, independent of $t,j$.
By the $C^{2}$ estimate:

\noindent
\[
\underset{B_{C_{j}}(0)}{\sup}~| \partial \overline{\partial} \widehat{u}_{j} |_{\omega_{\mathbb{C}^{n}}} \leqslant C C_{j}^{-2} \underset{X_{j}}{\sup}~| \partial \overline{\partial}u_{j} |_{\omega} \leqslant C
\]

As in \cite{TW17}~by the Sobolev embedding and the elliptic estimates
for $\Delta$ for each given $K \subset \mathbb{C}^{n}$ compact, $0 < \gamma < 1$
and $p>1$, there is a constant $C$ such that:

\noindent
\[
|| \widehat{u}_{j} ||_{C^{1,\gamma}(K)} + || \widehat{u}_{j} ||_{W^{2,p}(K)} \leqslant C.
\]

Hence, there is a subsequence of $\widehat{u}_{j}$ that converges to $U$ in $\mathbb{C}^{n}$ with
$\underset{\mathbb{C}^{n}}{\sup}~(|U| + |\nabla U|) \leqslant C$ and $\nabla U (0) \neq 0$, consequently non-constant.
Finally, the rest of the proof follows verbatum the one in \cite{TW17}
That is, one proves that $U$ is maximal in $\mathbb{C}^{n}$, hence satisfy all the hypothesis in Theorem~\ref{teo: liouville thm n-1 psh},
which implies $U$ is constant, arriving at a contradiction.
\end{proof}

\subsection{Higher-Order Estimates on holomorphic families}\label{CYG: HO}

As for the classical Monge-Ampère equation the higher-order estimates are purely local.
Hence, one can addapt a complex Evans-Krylov type argument as in~\cite[Theorem 6.1]{TW17}
to obtain a $C^{2,\gamma}$-estimate, that depend on 2 derivatives of $\log(f)$ and use Schauder estimates with a bootstraping argument to conclude~\ref{teo: estimates HF}.
Moreover, one can use the Evans-Krylov-Caffarelli theory developed in~\cite[Theorems 1.1 and 1.2]{TWWY15} to obtain estimates that depend only on Hölder bounds of $\log(f)$, on the metrics and the $C^{2}$ estimate on the solution.
Implying explicitly that the estimates remain uniform on the holomorphic family by Sections~\ref{CYG: C2} and~\ref{CYG: C1}.

This concludes the proof of~\ref{teo: estimates HF}.

\bibliographystyle{alpha}
\bibliography{refs.bib}

\vspace{4pt}
\hrule

\end{document}